\DeclareMathAlphabet{\mathpzc}{OT1}{pzc}{m}{it}
\newtheorem{theorem}{Theorem}[section]
\newtheorem{proposition}[theorem]{Proposition}
\newtheorem{lemma}[theorem]{Lemma}
\newtheorem{definition}[theorem]{Definition}
\newtheorem{remark}[theorem]{Remark}
\def\C{{\mathbb C}}
\def\Omc{\RR^N\setminus\Omega}
\def\RR{\mathbb{R}}
\def\NN{\mathbb{N}}
\def\CC{\mathbb{C}}
\def\Om{\Omega}
\def\bOm{\overline{\Om}}
\def\pOm{\partial\Omega}
\numberwithin{equation}{section}
\begin{document}

\title[Approximate controllability]{Approximate controllabilty from the exterior of space-time fractional diffusive equations}

\author{Mahamadi Warma}

\address{University of Puerto Rico, Faculty of Natural Sciences,
Department of Mathematics (Rio Piedras Campus), PO Box 70377 San Juan PR
00936-8377 (USA)}

 \email{mahamadi.warma1@upr.edu, mjwarma@gmail.com}

\keywords{Fractional Laplacian, non-homogeneous exterior Dirichlet type condition, unique continuation principle, space-time fractional diffusion equations,  approximate controllability.}

\subjclass[2010]{93B05, 93C20, 35R11, 26A33}

\begin{abstract}
Let $\Om\subset\RR^N$ be a bounded domain with a Lipschitz continuous boundary. We study the controllability of the space-time fractional diffusive equation
\begin{equation*}
\begin{cases}
\mathbb D_t^\alpha u+(-\Delta)^su=0\;\;&\mbox{ in }\;(0,T)\times\Omega,\\
u=g\chi_{(0,T)\times\mathcal O} &\mbox{ in }\;(0,T)\times(\RR^N\setminus\Omega),\\
u(0,\cdot)=u_0&\mbox{ in }\;\Omega,
\end{cases}
\end{equation*}
where $u=u(t,x)$ is the state to be controlled and $g=g(t,x)$ is the control function which is localized in a non-empty open subset $\mathcal O$ of $\Omc$. 
Here, $0<\alpha\le 1$, $0<s<1$ and $T>0$ are real numbers.
After giving an  explicit representation of solutions, we show that the system is always approximately controllable for every $T>0$, $u_0\in L^2(\Omega)$ and $g\in \mathcal D((0,T)\times\mathcal O)$ where $\mathcal O\subseteq\Omc$ is an arbitrary non-empty open set. The results obtained are sharp in the sense that such a system is never null controllable if $0<\alpha<1$. The proof of our result is based on a new unique continuation principle for the eigenvalues problem associated with the fractional Laplace operator subject to the zero Dirichlet exterior condition that we have established.
\end{abstract}

\maketitle

\section{Introduction}

Let $\Omega\subset\RR^N$ be a bounded domain with a Lipschitz continuous boundary $\pOm$. The main concern in the present paper is to study the controllability of the following exterior-initial value problem
\begin{equation}\label{main-eq}
\begin{cases}
\mathbb D_t^\alpha u+(-\Delta)^su=0\;\;&\mbox{ in }\;(0,T)\times\Omega,\\
u=g\chi_{(0,T)\times\mathcal O} &\mbox{ in }\;(0,T)\times(\RR^N\setminus\Omega),\\
u(0,\cdot)=u_0&\mbox{ in }\;\Omega.
\end{cases}
\end{equation}
In \eqref{main-eq}, $u=u(t,x)$ is the state to be controlled and $g=g(t,x)$ is the control function which is localized in a non-empty open subset $\mathcal O$ of $\RR^N\setminus\Omega$.

Here $0<\alpha\le 1$, $0<s<1$, $T>0$ are real numbers, $\mathbb D_t^\alpha$ denotes the Caputo time fractional derivative of order $\alpha$ (see Definition \ref{Caputo-deriv} below) and $(-\Delta)^s$ is the fractional Laplace operator (see \eqref{fl_def} below).  We refer to Section \ref{sec-pre} for a rigorous definition of the Caputo time fractional derivative and the fractional Laplace operator.

The purpose of the paper is to discuss the controllability properties of the system. 

We shall say that \eqref{main-eq} is approximately controllable in time $T>0$, if for any  $\varepsilon>0$ and $u_0, u_1\in L^2(\Omega)$, there is a control function $g\in L^2((0,T)\times\mathcal O)$ such that the corresponding unique strong solution $u$ of \eqref{main-eq} satisfies
\begin{align*}
\|u(T,\cdot)-u_1\|_{L^2(\Omega)}\le \varepsilon.
\end{align*}

If for every $u_0\in L^2(\Omega)$, there is a control function $g\in L^2((0,T)\times\mathcal O)$ such that the strong solution $u$ of \eqref{main-eq} satisfies
\begin{align*}
u(T,\cdot)=0\;\;\;\mbox{ in }\;\Omega,
\end{align*}
then \eqref{main-eq} is said to be null controllable in time $T>0$.

Giving that the following Dirichlet problem
\begin{equation*}
\begin{cases}
(-\Delta)^su=0\;\;&\mbox{ in }\;\Omega,\\
u=g&\mbox{ on }\pOm,
\end{cases}
\end{equation*}
is not well posed if $0<s<1$, then the study of the controllability of  \eqref{main-eq} cannot be done from the boundary $\pOm$, that is, one cannot replace the second equation in \eqref{main-eq} by $u=g$ on $(0,T)\times \pOm$. In other words our control $g$ cannot be localized on a subset of the boundary $\pOm$.
Since the operator $(-\Delta)^s$, for $0<s<1$, is nonlocal, then knowing $u$ at the boundary $\pOm$ is not enough to know $u$ on all $\RR^N$. The well-posed Dirichlet problem for the fractional Laplacian is given by
\begin{equation*}
\begin{cases}
(-\Delta)^su=0\;\;&\mbox{ in }\;\Omega,\\
u=g&\mbox{ in }\Omc.
\end{cases}
\end{equation*}
For this reason, we think that \eqref{main-eq} gives the right formulation that can replace the classical notion of controllability of PDEs from the boundary. Our objective is to study if \eqref{main-eq} can be approximately controllable from the exterior of $\Omega$. To the best of our knowledge, it is the first work that addresses the controllability of nonlocal equations from the exterior of the domain where the PDE is solved.

First of all, using the results by L\"u and Zuazua \cite{EZ}, one can show that \eqref{main-eq} is never null controllable if $0<\alpha<1$. Therefore it makes sense to study if the system can be approximately controllable from the exterior. The case $\alpha=1$ corresponds to a nonhomogeneous fractional heat type equation. After the present work has been submitted, the interior and the exterior null controllability properties of the one-dimensional fractional heat equation on an internal have been very recently investigated in \cite{BiHe} and \cite{WZ2}, respectively, by using an asymptotic gap condition on the eigenvalues of the fractional Laplace operator with the zero exterior condition on an open interval of $\RR$. 
The cases of the fractional Schr\"odinger and wave equations have been investigated by Biccari \cite{Umb} by using a Pohozaev identity for the fractional Laplacian established in \cite{RS-Po} (see also \cite{RS-Po2}). The interior or/and the exterior null controllabilty of the heat equation associated with the fractional Laplace operator in dimension $N\ge 2$ is still open. The difficulties to study such problems follow from the fact that (but are not limited to) there is still no appropriate Carleman type estimates for the fractional Laplace operator. For this reason and for the seek of completeness we also include the case $\alpha=1$ and investigate if the associated system is approximately controllable.

After giving some results of existence, uniqueness, regularity and explicit representation of solutions to \eqref{main-eq}, we shall show that  it is always approximately controllable, for any $u_0\in L^2(\Omega)$, $T>0$ and any $g\in \mathcal D((0,T)\times\mathcal O)$, where $\mathcal O\subset\Omc$ is an arbitrary non-empty open set. 
To obtain our result, we first establish a unique continuation principle for the eigenvalues problem of the fractional Laplace operator with the zero exterior condition. Using this, in a second step we show that the adjoint system associated with \eqref{main-eq} also satisfies the unique continuation property for evolution equations. We shall obtain the approximate controllability of the system as a consequence of the unique continuation property of the dual system.

We notice that nonlocal partial differential equations are typical models of anomalous diffusion. Space-time fractional diffusion equations have been
used to model anomalous transport in many diverse
disciplines, including finance, semiconductor research, biology,
and hydrogeology \cite{Hilfer}. In the context of flow in
porous media, fractional space derivatives model large
motions through highly conductive layers or fractures,
while fractional time derivatives describe particles that
remain motionless for extended periods of time. 
The scalar
space-fractional diffusion equation governs L\'evy motion,
and the tail parameter $s$  of the L\'evy motion equals the
order of the fractional derivative. Solutions to the vector
space-fractional diffusion equation are operator L\'evy
motions that may scale at different rates in different
directions. Fractional time derivatives are important in reactive
transport, since solutes may interact with the immobile
porous medium in highly nonlinear ways. There is
evidence that solutes may sorb for random amounts of
time that have a power law distribution, or move into
irregularly-sized blocks of relatively immobile water, producing
similar behavior. If the first moment of these
time delays diverges, then a fractional time derivative
applies. 
Anomalous diffusion deviates
from the standard description of Brownian motion, the main character of which is that
its mean squared displacement is a non-linear growth with respect to time.
Some relevant contributions in this field have been achieved. We refer for instance to \cite{ELP}
for a general presentation of nonlocal models and the generating dynamics under the terminology
of peridynamics,  to \cite{Noch} for the  first results of elliptic problems by means of finite element
approximations, to \cite{BeBo} for application to the atmospheric motion induced by the rotation of the
Earth and a description of how these models arise in the context of dune formation and dynamics, and to \cite{MePh} for application in industrial processes like cooling/heating in steel and glass
manufacturing.

Fractional PDEs have also recently received a considerable amount of 
          attention due to their ability to capture long-range correlations
          for instance of material properties and memory effects. One models the former by using
          the fractional Laplacian and the latter by using fractional time derivatives.
          The added advantage of fractional operators is the fact that they 
          impose less smoothness and are therefore more suitable to capture
          heterogeneous and multiscale effects for instance in geophysics 
          and image science 
          \cite{AnBa2017,AnRa2018-Manuscript}. 
          Fractional operators have shown remarkable potential in image denoising 
          to a point that they are competitive (even better) 
           \cite{AnRa2018-Manuscript} than the total variation 
          based approaches \cite{rudin1992nonlinear}.

Control theory is an interdisciplinary branch of engineering and mathematics that deals with influence behavior of dynamical systems. Controllability is one of the fundamental concepts in mathematical control theory. Its main goal is to drive the
solutions of a finite or infinite dimensional dynamical system to rest by means of the action
of an applied control. In the classical concept of controllability associated with local operators, the control function can only be localized inside the domain where the PDE is solved (interior control) or at the boundary of the domain (boundary control). In another words, since null or exact controllability is usually equivalent to an observability property for the associated adjoint system, this notion allows one to detect all the components of such a system based on a partial measurement in the interior or on the boundary.
We have shown above that for nonlocal PDEs associated with the fractional Laplace operator, it turns out that one cannot control or observe such a dynamical system from the boundary. This is due to the fact that since the underlying operator is nonlocal, there is always an interaction between the interior and the exterior of the domain where the PDE is solved. As we have described above, this is the first work where we have shown that for such dynamical systems, a boundary control does not make sense and therefore, should be replaced by an exterior control.
This novel concept of exterior controllability introduced in this paper is applicable  
          whenever a control is placed outside the observation domain (where the PDE is satisfied). 
          This work has been already extended in \cite{HAntil_RKhatri_MWarma_2018a} to optimal control of
          fractional PDEs and external source identification problems 
          and several illustrative computational
          examples. Furthermore, we have also very recently investigated in \cite{LWZ,LR-Wa,WZ1,WZ2} some exterior controllability problems of other dynamical systems such as, space-time super-diffusive equations, strongly damping fractional wave equations and the fractional Moore-Gibson-Thompson equations.
          We emphasize that classical models are not sufficiently rich and they only
          allow source/control placement either inside the observation domain (where the PDE is fulfilled)
          or on the boundary of the domain. Fractional models are sufficiently rich and allows
          new notions of control and source placement. For further details we also refer to \cite{GW-F,GW-CPDE, Go-Ma97, Ma97,Go-Ma00,Meer, Po99,Val} and their references.

In the present paper, we have considered the  Caputo time fractional derivative which has been also used recently to model fractional diffusion in plasma turbulence. Another advantage of using the Caputo derivative in modeling physical problems is that the Caputo derivative of  constant functions is zero. This shows that time-independent solutions are also solutions of the time-dependent problem and this is not the case for the Riemann-Liouville fractional time derivative.  

The rest of the paper is organized as follows. In Section \ref{sec-main} we state the main results of the article. The first one (Theorem \ref{theo-cont-pro}) states that  the adjoint system associated with \eqref{main-eq} satisfies the unique continuation property for evolution equations and the second main result (Theorem \ref{theo-appro-con}) reads that \eqref{main-eq} is always approximately controllable.
 In Section \ref{sec-pre} we introduce the function spaces needed to study our problem and we prove some intermediate results concerning the Dirichlet problem for the fractional Laplacian that will be used throughout the paper. In particular, we establish the unique continuation principle (Theorem \ref{theo-27}) for the eigenvalues problem of $(-\Delta)^s$ with the exterior condition $u=0$ in $\Omc$.
This is followed by the proof of the existence, uniqueness, regularity, the explicit representation of solutions of \eqref{main-eq} and its associated adjoint system (Theorem \ref{theo-weak} and Proposition \ref{Dual-theo-weak}) in Section \ref{sec-proof-in}. Finally in Section \ref{sec-proof-res}, we give the proof of the main results stated in Section \ref{sec-main}.

\section{The main results}\label{sec-main}

In this section we state the main results of the paper. Throughout the rest of the article, without any mention $0<s<1$ and $0<\alpha\le 1$ are fixed real numbers.
First, we recall our notion of approximately controllable. 

\begin{definition}
We shall say that the system \eqref{main-eq} is approximately controllable at time $T>0$, if for any $u_0, u_1\in L^2(\Omega)$ and $\varepsilon>0$, there is a control function $g$ such that the corresponding unique strong solution $u$ of \eqref{main-eq} satisfies
\begin{align}\label{def-app}
\|u(T,\cdot)-u_1\|_{L^2(\Omega)}\le \varepsilon.
\end{align}
\end{definition}

Next, let $u_0\in L^2(\Omega)$ and consider the following two systems:
\begin{equation}\label{Int-eq}
\begin{cases}
\mathbb D_t^\alpha u+(-\Delta)^su=0\;\;&\mbox{ in }\;(0,T)\times\Omega,\\
u=g &\mbox{ in }\;(0,T)\times(\RR^N\setminus\Omega),\\
u(0,\cdot)=0&\mbox{ in }\;\Omega,
\end{cases}
\end{equation}
and
\begin{equation}\label{Int-eq-2}
\begin{cases}
\mathbb D_t^\alpha w+(-\Delta)^sw=0\;\;&\mbox{ in }\;(0,T)\times\Omega,\\
w=0 &\mbox{ in }\;(0,T)\times(\RR^N\setminus\Omega),\\
w(0,\cdot)=u_0&\mbox{ in }\;\Omega.
\end{cases}
\end{equation}

Here, we shall use the fractional order Sobolev spaces  $W^{s,2}(\RR^N)$, $W_0^{s,2}(\bOm)$, their dual $W^{-s,2}(\RR^N)$, $W_0^{-s,2}(\bOm)$ that will be introduced in Section \ref{sec-32}, and we shall denote by $\langle\cdot,\cdot\rangle$, their duality map if there is no confusion.

The following is our notion of solution.

\begin{definition}
A function $u$ is said to be a strong solution of \eqref{Int-eq}, if for a.e. $t\in (0,T)$ and for every $T>0$, the following properties holds.
\begin{itemize}
\item Regularity:
\begin{equation*}
u\in C([0,T];L^2(\Omega)),\;\;\;\;\mathbb D_t^\alpha u\in C((0,T];W^{-s,2}(\RR^N)),
\end{equation*}
and \eqref{Int-eq} holds in $W^{-s,2}(\RR^N)$ for a.e. $t\in (0,T)$. That is, for every $v\in W^{s,2}(\RR^N)$ and a.e. $t\in (0,T)$, we have
\begin{align*}
\langle\mathbb D_t^\alpha u,v\rangle +\langle (-\Delta)^su,v\rangle=0.
\end{align*}
\item Initial and exterior conditions:
\begin{equation*}
u(0,\cdot)=0\;\;\mbox{ in }\;\Omega\;\mbox{ and }\; u=g\;\mbox{ in }\;(0,T)\times(\Omc).
\end{equation*}
\end{itemize}
\end{definition}

\begin{remark}\label{rem-33}
{\em
We mention the following facts.
\begin{enumerate}
\item Since  \eqref{Int-eq-2} can be rewritten as the following Cauchy problem:
\begin{equation*}
\mathbb D_t^\alpha w+(-\Delta)_D^sw=0\;\mbox{ in }\;(0,T)\times\Omega,\;\;\; w(0,\cdot)=u_0\;\mbox{ in }\;\Omega,
\end{equation*}
it follows that  (see e.g. \cite{GW-F,KLW} and their references)  for every $u_0\in L^2(\Omega)$, there is a $w\in C([0,T];L^2(\Omega))$ which is the unique strong solution of \eqref{Int-eq-2}. Here $(-\Delta)_D^s$ denotes the realization in $L^2(\Omega)$ of $(-\Delta)^s$ with the zero exterior condition $u=0$ in $\Omc$ (see Section \ref{sec-33}).

\item Assume that \eqref{Int-eq} is approximately controllable in the sense of \eqref{def-app} and let $u_1$ be the approximated function. Then for every $\varepsilon>0$, there is a control function $g$ such that the solution $u$ of \eqref{Int-eq} satisfies
\begin{align}\label{eq-app}
\|u(T,\cdot)-(u_1-w(T,\cdot))\|_{L^2(\Omega)}\le \varepsilon.
\end{align}
By definition, we have that $u+w$ solves \eqref{main-eq} and it follows from \eqref{eq-app} that
\begin{align*}
\|(u+w)(T,\cdot)-u_1\|_{L^2(\Omega)}\le \varepsilon.
\end{align*}
Thus \eqref{main-eq} is approximately controllable. In view of this property, in our study we shall consider \eqref{Int-eq} instead of \eqref{main-eq}.
\end{enumerate}
}
\end{remark}

Next, using the integration by parts formula (see \eqref{IP01}) we have that the adjoint system associated with \eqref{Int-eq} is given by
\begin{equation}\label{dual}
\begin{cases}
D_{t,T}^\alpha v+(-\Delta)^sv=0\;\;&\mbox{ in }\;(0,T)\times\Omega,\\
v=0 &\mbox{ in }\;(0,T)\times(\RR^N\setminus\Omega),\\
I_{t,T}^{1-\alpha}v(T,\cdot)=u_0&\mbox{ in }\;\Omega,
\end{cases}
\end{equation}
where $D_{t,T}^\alpha$ (resp. $I_{t,T}^{1-\alpha}$) denotes the right Riemann-Liouville time fractional derivative of order $\alpha$ (resp. the right Riemann-Liouville time fractional integral of order $1-\alpha$) that will be introduced in Section \ref{sec-31}.

\begin{definition}
A function $v$ is said to be a strong solution of \eqref{dual}, if for a.e. $t\in (0,T)$ and for every $T>0$, the following properties holds.
\begin{itemize}
\item Regularity:
\begin{equation*}
 I_{t,T}^{1-\alpha}v\in C([0,T];L^2(\Om)),\;\;\;D_{t,T}^\alpha v
\in C((0,T);W^{-s,2}(\bOm)),
\end{equation*}
and \eqref{dual} holds in $W^{-s,2}(\bOm)$ for a.e. $t\in (0,T)$. That is, for every $u\in W_0^{s,2}(\bOm)$ and a.e. $t\in (0,T)$, we have
\begin{align*}
\langle D_{t,T}^\alpha v,u\rangle +\langle (-\Delta)^sv,u\rangle=0.
\end{align*}
\item Final condition:
\begin{equation*}
I_{t,T}^{1-\alpha}v(T,\cdot)=u_0\;\;\mbox{ in }\;\Omega.
\end{equation*}
\end{itemize}
\end{definition}

Our first main result shows that \eqref{dual} satisfies the unique continuation property.

\begin{theorem}\label{theo-cont-pro}
Let $u_0\in L^2(\Omega)$ and $v$  the unique strong solution of \eqref{dual}. Let $\mathcal N_s$ denote the nonlocal normal derivative (see \eqref{NLND}) and let
$\mathcal O\subseteq\Omc$ be an arbitrary non-empty open set.
If $\mathcal N_sv=0$ in $(0,T)\times\mathcal O$, then $v=0$ in $(0,T)\times\Omega$.
\end{theorem}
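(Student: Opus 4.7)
The plan is to combine the spectral representation of the adjoint solution with the eigenvalue unique continuation principle from Theorem \ref{theo-27}. Let $\{(\lambda_n,\varphi_n)\}_{n\in\NN}$ denote the eigenpairs of the Dirichlet fractional Laplacian $(-\Delta)_D^s$, normalized so that $\{\varphi_n\}$ is an orthonormal basis of $L^2(\Om)$. First I would expand the final datum as $u_0=\sum_n \hat u_{0,n}\varphi_n$ and invoke the explicit representation formula for the strong solution of the adjoint problem derived in Section~\ref{sec-proof-in} (see Proposition~\ref{Dual-theo-weak}) to obtain
\begin{equation*}
v(t,x) \;=\; \sum_n \hat u_{0,n}\, (T-t)^{\alpha-1} E_{\alpha,\alpha}\!\bigl(-\lambda_n(T-t)^\alpha\bigr)\varphi_n(x),
\end{equation*}
the change of variable $\tau=T-t$ turning \eqref{dual} into a Riemann--Liouville fractional Cauchy problem whose solution is classical.

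Next, I would regroup the series by the distinct eigenvalues $\{\mu_k\}$, setting $\Psi_k:=\sum_{n:\lambda_n=\mu_k}\hat u_{0,n}\varphi_n$ to be the $L^2(\Om)$-projection of $u_0$ onto the (finite-dimensional) eigenspace of $\mu_k$. This gives
\begin{equation*}
v(t,x) \;=\; \sum_k (T-t)^{\alpha-1} E_{\alpha,\alpha}\!\bigl(-\mu_k(T-t)^\alpha\bigr)\Psi_k(x).
\end{equation*}
For each $x\in\mathcal O$ with $\mathrm{dist}(x,\pOm)>0$, the functional $\varphi\mapsto\mathcal N_s\varphi(x)=-C_{N,s}\int_\Om\varphi(y)|x-y|^{-N-2s}\,dy$ is a bounded linear functional on $L^2(\Om)$, so term-by-term differentiation is legitimate and yields
\begin{equation*}
0 \;=\; \mathcal N_s v(t,x) \;=\; \sum_k (T-t)^{\alpha-1} E_{\alpha,\alpha}\!\bigl(-\mu_k(T-t)^\alpha\bigr)\,\mathcal N_s\Psi_k(x) \quad\text{for a.e. } t\in(0,T).
\end{equation*}
If $\mathcal O$ touches $\pOm$, it suffices to work on a non-empty open subset of $\mathcal O$ at positive distance from $\pOm$, since Theorem \ref{theo-27} only requires vanishing of the nonlocal normal derivative on some non-empty open subset of $\Omc$.

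The final step is to separate the time coefficients. Real-analyticity on $(0,\infty)$ of each map $\tau\mapsto\tau^{\alpha-1}E_{\alpha,\alpha}(-\mu_k\tau^\alpha)$ propagates the vanishing from $\tau\in(0,T)$ to all of $\tau\in(0,\infty)$, after which the Laplace-transform identity $\mathcal L[\tau^{\alpha-1}E_{\alpha,\alpha}(-\mu\tau^\alpha)](s)=1/(s^\alpha+\mu)$ together with the distinctness of the $\mu_k$'s forces $\mathcal N_s\Psi_k(x)=0$ for every $k$; for $\alpha=1$ this reduces to the standard linear independence of distinct real exponentials. Hence $\mathcal N_s\Psi_k\equiv 0$ on the chosen open subset of $\mathcal O$, and since each $\Psi_k$ is itself an eigenfunction of $(-\Delta)_D^s$ associated with the eigenvalue $\mu_k$, Theorem \ref{theo-27} forces $\Psi_k\equiv 0$ in $\Om$ for every $k$. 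Substituting back into the series yields $v\equiv 0$ on $(0,T)\times\Om$.

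The main obstacle I anticipate is neither the spectral decomposition nor the appeal to Theorem \ref{theo-27}, but rather the rigorous justification of the two limit exchanges: (i) commuting $\mathcal N_s$ with the $L^2(\Om)$-convergent eigenfunction series, which requires controlling the series uniformly in $x$ over a compact subset of $\mathcal O$ away from $\pOm$, and (ii) the countable linear independence of the Mittag-Leffler traces $\{(T-t)^{\alpha-1}E_{\alpha,\alpha}(-\mu_k(T-t)^\alpha)\}_k$ on the finite interval $(0,T)$; I would handle (ii) by first extending the vanishing from $(0,T)$ to $(0,\infty)$ via real-analyticity, and only then passing to the Laplace transform to extract the coefficients.
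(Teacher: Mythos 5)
Your proposal is correct and follows essentially the same route as the paper: spectral expansion of $v$ grouped by distinct eigenvalues, analytic extension in time of $\mathcal N_s v$, Laplace transform to turn the Mittag--Leffler traces into $\sum_k \mathcal N_s\Psi_k/(s^\alpha+\mu_k)=0$, extraction of each coefficient via the distinctness of the poles (the paper does this by analytic continuation in $\eta=s^\alpha$ and a contour integral around each $-\mu_k$), and finally Theorem \ref{theo-27} applied to each eigenprojection $\Psi_k$. The two technical points you flag are exactly the ones the paper handles, via Lemma \ref{lem-anal} (uniform convergence of the $\mathcal N_s$-series on compacta of $\Sigma_T$, using boundedness of $\mathcal N_s$ on $D((-\Delta)_D^s)$) and the dominated-convergence estimate \eqref{M2} justifying the term-by-term Laplace transform.
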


The second main result is a direct consequence of Theorem \ref{theo-cont-pro}.

\begin{theorem}\label{theo-appro-con}
The system \eqref{main-eq} is approximately controllable at any time $T>0$, for every $u_0\in L^2(\Omega)$ and $g\in \mathcal D((0,T)\times\mathcal O)$, where $\mathcal O\subseteq\Omc$ is an arbitrary non-empty open.  That is,
\begin{align*}
\overline{\Big\{u(T,\cdot):\; g\in \mathcal D((0,T)\times\mathcal O)\Big\}}^{L^2(\Omega)}=L^2(\Omega),
\end{align*}
where $u$ is the unique strong solution of \eqref{main-eq} with initial data $u_0$.
\end{theorem}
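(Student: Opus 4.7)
The plan is to reduce approximate controllability of \eqref{main-eq} to the unique continuation property of the adjoint system \eqref{dual} via a standard Hahn--Banach duality argument. By Remark \ref{rem-33}, it suffices to prove the analogous statement for \eqref{Int-eq}, whose reachable set is $\mathcal R(T):=\{u(T,\cdot)\,:\, g\in \mathcal D((0,T)\times\mathcal O)\}$. Since $\mathcal R(T)$ is a linear subspace of $L^2(\Om)$, by Hahn--Banach its closure equals $L^2(\Om)$ if and only if every $u_0\in L^2(\Om)$ satisfying $\int_\Om u(T,x)u_0(x)\,dx=0$ for all admissible controls $g$ must vanish.

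The heart of the matter is therefore to derive a duality identity expressing this inner product in terms of $g$ and the solution $v$ of the adjoint system \eqref{dual}. I would fix such a $u_0$, let $v$ be the unique strong solution of \eqref{dual} with final data $u_0$, pair the equation for $u$ against $v$ and integrate over $(0,T)\times\Om$. Two integration by parts formulas are needed: in time, transferring $\mathbb D_t^\alpha u$ onto $D_{t,T}^\alpha v$ and producing the boundary contributions at $t=0$ (which vanish because $u(0,\cdot)=0$ in $\Om$) and at $t=T$ (where $I_{t,T}^{1-\alpha}v(T,\cdot)=u_0$ appears), and in space, the integration by parts formula \eqref{IP01} for $(-\Delta)^s$ with non-homogeneous exterior data, which produces the nonlocal normal derivative term with $g$ against $\mathcal N_s v$ (the other exterior contribution vanishes since $v=0$ in $\Om^c$). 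The resulting identity has the form
\begin{equation*}
\int_\Om u(T,x)u_0(x)\,dx \;=\; -\int_0^T\!\!\int_{\mathcal O} g(t,x)\,\mathcal N_s v(t,x)\,dx\,dt.
\end{equation*}

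From here the argument is short. Assuming the left-hand side vanishes for every $g\in\mathcal D((0,T)\times\mathcal O)$, the fundamental lemma of the calculus of variations forces $\mathcal N_s v\equiv 0$ in $(0,T)\times\mathcal O$. Theorem \ref{theo-cont-pro} then yields $v\equiv 0$ in $(0,T)\times\Om$; combined with the exterior condition $v=0$ on $(0,T)\times\Om^c$ we conclude $v\equiv 0$ on $(0,T)\times\RR^N$, and therefore $u_0=I_{t,T}^{1-\alpha}v(T,\cdot)=0$, which completes the density argument.

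The main obstacle I expect is the rigorous justification of the time integration by parts between the Caputo derivative $\mathbb D_t^\alpha$ and the right Riemann--Liouville derivative $D_{t,T}^\alpha$, with correct tracking of the boundary terms at $t=0$ and $t=T$ (the $t=T$ term being precisely what makes $u_0$ enter the identity through $I_{t,T}^{1-\alpha}v(T,\cdot)$). One must also check that the regularity stipulated in the definitions of strong solution for \eqref{Int-eq} and \eqref{dual} is sufficient to carry out the pairing in $W^{-s,2}(\RR^N)$--$W^{s,2}(\RR^N)$ duality. The cleanest route is to first run the calculation for smooth data $u_0$ and $g$ using the explicit representation of solutions established earlier in the paper (Theorem \ref{theo-weak} and Proposition \ref{Dual-theo-weak}), then extend by density and continuous dependence; once this is in place, all other steps are essentially formal.
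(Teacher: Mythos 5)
Your proposal is correct and follows essentially the same route as the paper: reduction to \eqref{Int-eq} via Remark \ref{rem-33}, derivation of the duality identity $\int_\Om u(T,\cdot)u_0\,dx+\int_0^T\int_{\Omc}g\,\mathcal N_s v\,dx\,dt=0$ by integrating by parts in time (\eqref{IP01}) and space (\eqref{Int-Part}), then the fundamental lemma of the calculus of variations and Theorem \ref{theo-cont-pro} to force $u_0=0$. Even your anticipated technical point is handled the same way in the paper, which integrates over $(0,T-\delta)$ and lets $\delta\downarrow 0$, relying on the regularity from Theorem \ref{theo-weak} and Proposition \ref{Dual-theo-weak}.
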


\section{Preliminary results}\label{sec-pre}

In this section we give some notations, introduce the function spaces needed to investigate our problem and we also prove some important intermediate results that will be used throughout the paper.

\subsection{Fractional in time derivatives and the Mittag-Leffler functions}  \label{sec-31}
We first recall the notion of fractional-in-time derivative in the sense of
Caputo and Riemann-Liouville. Let $\alpha \in \left( 0,1\right) $ and define%
\begin{equation*}
g_{\alpha }\left( t\right) := 
\begin{cases}
\displaystyle\frac{t^{\alpha -1}}{\Gamma (\alpha )} & \text{if }t>0, \\ 
0 & \text{if }t\leq 0,
\end{cases}
\label{g_alpha}
\end{equation*}
where $\Gamma$ is the usual Gamma function. It is convenient to denote $g_0:=\delta_0$, the Dirac measure concentrated at the point $0$. Throughout this subsection, $T>0$ is a real number, $X$ is a Banach space and we consider functions defined from $(0,T)$ into $X$.

\begin{definition}
\label{Caputo-deriv}Let $f\in C\left(
[0,T];X\right) $ such that $(g_{1-\alpha }\ast f)\in W^{1,1}\left( (0,T);X\right)
. $ The {\bf Riemann-Liouville fractional derivative of order $\alpha$} is given for almost all $t\in (0,T)$ by
\begin{equation*}
D_{t}^{\alpha }f\left( t\right) :=\frac{d}{dt}\Big(g_{1-\alpha }\ast f\Big)%
(t)=\frac{d}{dt}\int_{0}^{t}g_{1-\alpha }\left( t-\tau \right) f\left(
\tau\right) d\tau.
\end{equation*}
We define the {\bf fractional derivative of order $
\alpha ,$ of Caputo-type}, as follows:%
\begin{equation}
\mathbb D_{t}^{\alpha }f\left( t\right) :=D_{t}^{\alpha }\left( f\left(
t\right) -f\left( 0\right) \right) ,\text{ for a.e. }t\in (0,T].  \label{fract}
\end{equation}
\end{definition}

We notice that \eqref{fract} in Definition \ref{Caputo-deriv} gives a weaker
notion of (Caputo) fractional derivative compared to the original definition
introduced by Caputo in the late 1960s (see \cite{Cap}) which reads
\begin{align}\label{Cap-de}
\partial_t^\alpha f(t)=(g_{1-\alpha}\ast \partial_tf)(t).
\end{align}
In particular, \eqref{fract} does not require $f$ to be differentiable. In addition we have that 
$\mathbb D_{t}^{\alpha }\left( c\right) =0,$ for any constant $c$. For these
reasons, \eqref{fract} offers a better alternative than the classical notion
of Caputo derivative given in \eqref{Cap-de}. We refer to  \cite[Proposition 2.34]{PN}
which shows the two notions coincide when $f$ is smooth enough, namely, 
\begin{equation*} 
\mathbb D_{t}^{\alpha }f=\partial_t^\alpha f=g_{1-\alpha }\ast \partial _{t}f,\;\mbox{ for }%
\;f\in C^{1}\left( [0,T];X\right) . 
\end{equation*}%
In the classical case when $\alpha
=1,$ we let $\mathbb D_{t}^{1}:=d/dt\left( =\partial _{t}\right) $.

The {\bf right Riemann-Liouville fractional integral of order $\alpha$} is defined as
\begin{align*}
I_{t,T}^\alpha f(t)=\frac{1}{\Gamma(\alpha)}\int_t^T (\tau-t)^{\alpha-1}f(\tau)\;d\tau.
\end{align*}
The {\bf right Riemann-Liouville fractional derivative of order $\alpha$} is defined by

\begin{align}\label{D-R}
D_{t,T}^\alpha u(t)=-\frac{d}{dt}\left(I_{t,T}^{1-\alpha} f\right)(t)
=\frac{-1}{\Gamma(1-\alpha)}\frac{d}{dt}\int_t^T(\tau-t)^{-\alpha}f(\tau)\;d\tau.
\end{align}
From \eqref{D-R}, we have that if $f$ is differentiable, then $D_{t,T}^1f=-\partial_tf$.

The {\bf left Riemann-Liouville fractional integral of order $\alpha$} is defined as
\begin{align}\label{l-int}
I_{0,t}^\alpha f(t)=\frac{1}{\Gamma(\alpha)}\int_0^t (t-\tau)^{\alpha-1}f(\tau)\;d\tau.
\end{align}
It follows from \eqref{Cap-de} and \eqref{l-int} that for every $f\in W^{1,1}((0,T);X)$, we have that
\begin{align}\label{eq33}
\partial_t^\alpha f(t)=I_{0,t}^{1-\alpha}\partial_tf(t).
\end{align}

The following {\bf integration by parts formula} is taken from \cite{Agr}. Let $0<\alpha\le1$. Then
\begin{align}\label{IP01}
\int_0^Tg(t)\mathbb D_{t}^\alpha f(t)\;dt=&\int_0^Tf(t)D_{t,T}^\alpha g(t)\;dt
+\left[f(t)I_{t,T}^{1-\alpha}g(t)\right]_{t=0}^{t=T},
\end{align}
provided that the left and right-hand sides expressions make sense.

The following result is contained in \cite[Corollary, pp. 67]{Ma}.

\begin{lemma}
Let $0<\alpha\le 1$ and $1<p,q<\infty$ be such that $\frac 1p+\frac 1q\le 1+\alpha$, with $p\ne 1$, $q\ne 1$ in the case $\frac 1p+\frac 1q= 1+\alpha$. If $f\in L^p((0,T);X)$ and $g\in L^q((0,T);X)$, then
\begin{align}\label{eq35}
\Big((I_{0,t}^\alpha f)\ast g\Big)(t)=\Big(f\ast(I_{0,t}^\alpha g)\Big)(t),\;\;\;t\in (0,T).
\end{align}
\end{lemma}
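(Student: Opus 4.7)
The plan is to recognize that the left Riemann--Liouville fractional integral is itself a convolution, so that the identity reduces to a straightforward associativity/commutativity statement for convolutions, with the quantitative hypothesis on $p,q$ entering only to legitimize an application of Fubini--Tonelli. Concretely, setting $g_\alpha(t):=t^{\alpha-1}/\Gamma(\alpha)$ for $t>0$ and extending by zero for $t\le 0$, one reads off from \eqref{l-int} that $I_{0,t}^\alpha h = g_\alpha \ast h$ on $(0,T)$ for every $h$ for which the expression makes sense. Hence the identity \eqref{eq35} becomes
\begin{equation*}
(g_\alpha \ast f)\ast g \;=\; f \ast (g_\alpha \ast g),
\end{equation*}
which is formally immediate from associativity and commutativity of convolution.

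To make this rigorous, I would write both sides explicitly as iterated integrals and verify by the change of variable $\sigma\mapsto t-\sigma-\tau$ in one of the inner integrals that each side equals
\begin{equation*}
\frac{1}{\Gamma(\alpha)}\iint_{\{\sigma,\tau\ge 0,\;\sigma+\tau\le t\}} (t-\sigma-\tau)^{\alpha-1}\,f(\sigma)\,g(\tau)\,d\sigma\,d\tau,
\end{equation*}
with the roles of the variables exchanged on the two sides. The identity then follows from Fubini--Tonelli applied to this double integral over the triangle $\{\sigma+\tau\le t\}\subset(0,T)^2$.

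The main obstacle — and the only place where the integrability exponents are used — is the justification that the double integral converges absolutely. For this I would invoke the fractional-integral version of Young's inequality on a bounded interval: because $g_\alpha\in L^r(0,T)$ for every $r<1/(1-\alpha)$, the operator $I_{0,t}^\alpha$ maps $L^p((0,T);X)$ into $L^{p^\ast}((0,T);X)$ with $1/p^\ast = 1/p-\alpha$ (when this is positive), and analogous mapping properties hold at the endpoints. The hypothesis $1/p+1/q\le 1+\alpha$ rearranges to $1/p^\ast + 1/q\le 1$, which is precisely Young's condition ensuring that $(g_\alpha\ast f)\ast g$ is defined as an absolutely convergent integral for a.e.\ $t\in(0,T)$; the excluded cases $p=1$ or $q=1$ in the equality case $1/p+1/q=1+\alpha$ are exactly the non-reflexive endpoints where weak-type $(1,1)$ failures occur. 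With this uniform integrability in hand, Fubini--Tonelli applies on the triangular domain and the two iterated integrals coincide, yielding \eqref{eq35}.
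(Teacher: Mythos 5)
The paper offers no proof of this lemma to compare against: it is quoted directly from the reference cited just above it ([Ma, Corollary, p.~67]). Judged on its own, your argument is the standard one and is essentially correct: writing $I_{0,t}^{\alpha}h=g_{\alpha}\ast h$ with $g_{\alpha}(t)=t^{\alpha-1}/\Gamma(\alpha)$ reduces \eqref{eq35} to associativity and commutativity of the one-sided convolution; both iterated integrals equal the double integral of $(t-\sigma-\tau)^{\alpha-1}f(\sigma)g(\tau)$ over the triangle $\{\sigma,\tau\ge 0,\ \sigma+\tau\le t\}$, and Fubini--Tonelli closes the argument once absolute convergence is secured.

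Two refinements concerning where the hypotheses actually enter. First, if one only needs \eqref{eq35} for almost every $t$ (which is all the paper ever uses, inside time integrals such as \eqref{eq39}), no condition on $p,q$ beyond membership in $L^1$ is required: on the bounded interval $(0,T)$ all three factors $g_{\alpha}$, $f$, $g$ lie in $L^1$, and a triple $L^1$-convolution converges absolutely for a.e.\ $t$ by Tonelli alone. The condition $\frac1p+\frac1q\le 1+\alpha$ is what upgrades the conclusion to \emph{every} $t\in(0,T)$, which is how the lemma is stated: with $1/p^{\ast}=1/p-\alpha$ it rearranges to $1/p^{\ast}+1/q\le 1$, hence $L^q(0,T)\subseteq L^{(p^{\ast})'}(0,T)$, and H\"older makes $\int_0^t|I_{0,t}^{\alpha}f(t-\tau)|\,|g(\tau)|\,d\tau$ finite for each fixed $t$. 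So your sentence attributing the role of the exponent condition to \emph{a.e.}\ convergence slightly misplaces its purpose. Second, the mapping $I_{0,t}^{\alpha}:L^p\to L^{p^{\ast}}$ only makes sense when $\alpha p<1$; in the complementary case $\alpha p\ge 1$ you should note instead that $g_{\alpha}\ast f\in L^r(0,T)$ for every finite $r$ (H\"older against $g_{\alpha}\in L^{r}$ for all $r<1/(1-\alpha)$), and pairing against $g\in L^q$ with $q>1$ via the exponent $r=q'$ again gives finiteness for every $t$. Neither point affects the skeleton of your proof, which is sound.
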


The Mittag-Leffler function with two parameters is defined as follows:
\begin{align}\label{mm}
E_{\alpha, \beta}(z) := \sum_{n=0}^{\infty}\frac{z^n}{\Gamma(\alpha n + \beta)},\; \;\alpha>0,\;\beta \in\C, \quad z \in \C.
\end{align}
It is clear that $E_{1,1}(z)=e^z$ and that $E_{\alpha,\beta}(z)$ is an entire function.
The following estimate of $E_{\alpha,\beta}(z)$ will be useful. Let $\alpha>0$, $\beta\in\RR$ and $\mu$ be such that $\frac{\alpha\pi}{2}<\mu<\min\{\pi,\alpha\pi\}$. Then there is a constant $C=C(\alpha,\beta,\mu)>0$ such that
\begin{equation}\label{Est-MLF}
|E_{\alpha,\beta}(z)|\le \frac{C}{1+|z|},\;\;\;\mu\le |\mbox{arg}(z)|\le \pi.
\end{equation}
In the literature, frequently the notation $E_\alpha=E_{\alpha, 1}$ is used.

The Laplace transform  of the Mittag-Leffler function is given by:
\begin{equation}\label{lap-ml}
\int_0^{\infty} e^{-\lambda t} t^{\alpha  + \beta-1}
E_{\alpha,\beta}(-\omega t^{\alpha})dt = \frac{
\lambda^{\alpha-\beta}}{(\lambda^{\alpha} + \omega)^{2}},
\quad \mbox{Re}(\lambda)> |\omega|^{\frac{1}{\alpha}}.
\end{equation}
If $\lambda>0$, then
\begin{equation}\label{Est-MLF2}
\frac{d}{dt}\Big(E_{\alpha,1}(-\lambda t^\alpha)\Big)=-\lambda t^{\alpha-1}E_{\alpha,\alpha}(-\lambda t^\alpha),\;\;t>0.
\end{equation}
Using integration we easily get that for every $0<\alpha\le 1$ and $\lambda>0$,
\begin{align}\label{eq34}
I_{0,t}^{1-\alpha}\left(t^{\alpha-1}E_{\alpha,\alpha}(-\lambda t^\alpha)\right)=E_{\alpha,1}(-\lambda t^\alpha),\;\;t>0.
\end{align}
The proofs of \eqref{Est-MLF} and \eqref{Est-MLF2} are contained in \cite[Chapter 1]{Po99}. The proofs of \eqref{lap-ml} and \eqref{eq34} are contained in \cite{KLW} and \cite{KW}, respectively.

In order to show the regularity of strong solutions of \eqref{main-eq}, we shall frequently use the following estimates that follow from \eqref{Est-MLF} and a straightforward computation.

\begin{lemma}\label{lem-INE}
Let $0<\alpha\le 1$, $0\le\gamma<1$, $\lambda>0$ and $\beta>0$ be real numbers. Then the following assertions hold.
\begin{enumerate}
\item There is a constant $C>0$, such that for every $t>0$,
\begin{align}\label{IN-L2}
\left|\lambda^{1-\gamma}t^{\alpha-1} E_{\alpha,\beta}(-\lambda t^\alpha)\right|\le Ct^{\alpha\gamma-1}.
\end{align}

\item There is a constant $C>0$, such that for every $t>0$,
\begin{equation}\label{New}
\left|\lambda^{1-\gamma} E_{\alpha,1}(-\lambda t^\alpha)\right|\le Ct^{\alpha(\gamma-1)}.
\end{equation}
\end{enumerate}
\end{lemma}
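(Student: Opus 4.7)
The plan is to derive both inequalities from the single analytic ingredient \eqref{Est-MLF}, reducing them to an elementary boundedness statement for a one-parameter real function. Since the argument $z = -\lambda t^{\alpha}$ has $|\arg z| = \pi$ for all $\lambda, t > 0$, and since $0 < \alpha \le 1$ lets us choose $\mu \in (\alpha\pi/2, \min\{\pi,\alpha\pi\})$ (treating $\alpha = 1$ separately if desired, where one can also invoke the direct bound $E_{1,\beta}(-\lambda t) = O(1/(1+\lambda t))$), the hypothesis $\mu \le |\arg z| \le \pi$ of \eqref{Est-MLF} is satisfied. Therefore, for some constant $C = C(\alpha,\beta,\mu) > 0$,
\begin{equation*}
\bigl|E_{\alpha,\beta}(-\lambda t^{\alpha})\bigr| \;\le\; \frac{C}{1 + \lambda t^{\alpha}}, \qquad t > 0,\ \lambda > 0.
\end{equation*}

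For part (1), I would multiply this pointwise estimate by $\lambda^{1-\gamma} t^{\alpha-1}$ and then factor out $t^{\alpha\gamma - 1}$, obtaining
\begin{equation*}
\bigl|\lambda^{1-\gamma} t^{\alpha-1} E_{\alpha,\beta}(-\lambda t^{\alpha})\bigr| \;\le\; C\, t^{\alpha\gamma - 1}\,\frac{(\lambda t^{\alpha})^{1-\gamma}}{1 + \lambda t^{\alpha}}.
\end{equation*}
Setting $y := \lambda t^{\alpha} > 0$, the claim reduces to showing that $\varphi(y) := y^{1-\gamma}/(1+y)$ is bounded on $(0,\infty)$. Because $0 \le \gamma < 1$ gives $1 - \gamma \in (0,1]$, we have $\varphi(y) \to 0$ as $y \to 0^{+}$, and $\varphi(y) \sim y^{-\gamma}$ as $y \to \infty$, which is bounded (by $1$) whether $\gamma = 0$ or $\gamma > 0$. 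Hence $\varphi$ attains a finite maximum $M$ on $(0,\infty)$, yielding \eqref{IN-L2} with constant $CM$.

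For part (2), the same substitution works: starting from the Mittag-Leffler estimate with $\beta = 1$ and pulling out the desired power $t^{\alpha(\gamma-1)} = t^{-\alpha(1-\gamma)}$, one gets
\begin{equation*}
\bigl|\lambda^{1-\gamma} E_{\alpha,1}(-\lambda t^{\alpha})\bigr| \;\le\; C\, t^{\alpha(\gamma-1)}\,\frac{(\lambda t^{\alpha})^{1-\gamma}}{1 + \lambda t^{\alpha}},
\end{equation*}
and the right-hand factor is again $\varphi(y)$, already shown to be bounded. This yields \eqref{New}.

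There is essentially no obstacle here; both estimates are consequences of the uniform decay bound \eqref{Est-MLF} plus elementary calculus on $\varphi$. The only mild care needed is in the choice of $\mu$ when $\alpha = 1$ (where $\min\{\pi,\alpha\pi\} = \pi$ forces $\mu < \pi$, but this still leaves room since $|\arg(-\lambda t^{\alpha})| = \pi \ge \mu$), and in noting that the constants in (1) and (2) depend on $\alpha$, $\beta$, and $\gamma$ but not on $\lambda$ or $t$, which is exactly what is needed when these bounds are later used to integrate the Mittag-Leffler kernel against eigenfunction expansions of the solution.
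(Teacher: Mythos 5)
Your proof is correct and matches the paper's intent: the paper offers no explicit argument, stating only that the lemma "follows from \eqref{Est-MLF} and a straightforward computation," and your computation — applying the decay bound $|E_{\alpha,\beta}(-\lambda t^{\alpha})|\le C/(1+\lambda t^{\alpha})$ and reducing both estimates to the boundedness of $y^{1-\gamma}/(1+y)$ on $(0,\infty)$ — is exactly that computation. The factorizations $\lambda^{1-\gamma}t^{\alpha-1}=(\lambda t^{\alpha})^{1-\gamma}t^{\alpha\gamma-1}$ and $\lambda^{1-\gamma}=(\lambda t^{\alpha})^{1-\gamma}t^{\alpha(\gamma-1)}$ check out, as does your handling of the argument condition $|\arg(-\lambda t^{\alpha})|=\pi\ge\mu$.
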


For more details on fractional order derivatives, integrals and the  Mittag-Leffler functions  we refer to \cite{Agr, Ba01,Go-Ma97,Go-Ma00,Ma97,Mi-Ro,Po99} and the references therein.

\subsection{Fractional order Sobolev spaces and the fractional Laplacian} \label{sec-32}
For $0<s<1$ and $\Omega\subseteq\RR^N$ an arbitrary open set, we let
\begin{align*}
W^{s,2}(\Omega):=\left\{u\in L^2(\Omega):\;\int_{\Omega}\int_{\Omega}\frac{|u(x)-u(y)|^2}{|x-y|^{N+2s}}\;dxdy<\infty\right\},
\end{align*}
and we endow it with the norm defined by
\begin{align*}
\|u\|_{W^{s,2}(\Omega)}=\left(\int_{\Omega}|u(x)|^2\;dx+\int_{\Omega}\int_{\Omega}\frac{|u(x)-u(y)|^2}{|x-y|^{N+2s}}\;dxdy\right)^{\frac 12}.
\end{align*}
We set
\begin{align*}
W_0^{s,2}(\bOm):=\Big\{u\in W^{s,2}(\RR^N):\;u=0\;\mbox{ in }\;\RR^N\setminus\Omega\Big\}.
\end{align*}
Letting $\mathcal D(\Omega)$ denote the space of all continuously infinitely differentiable functions with compact support in $\Omega$, then $\mathcal D(\Omega)$ is not always dense in $W_0^{s,2}(\bOm)$. But if $\Omega$ has a Lipschitz continuous boundary, then $\mathcal D(\Omega)$ is dense in $W_0^{s,2}(\bOm)$ (see e.g. \cite{Val-De}).

We shall denote by $W^{-s,2}(\RR^N)$ and $W^{-s,2}(\bOm)$ the dual of $W^{s,2}(\RR^N)$ and $W_0^{s,2}(\bOm)$, respectively, and by $\langle\cdot,\cdot\rangle$, their duality map if there is no confusion.

We let
\begin{align}\label{local}
W_{\rm loc}^{s,2}(\Omega):=\Big\{u\in L^2(\Omega):\; u\varphi\in W^{s,2}(\Omega),\;\forall\;\varphi\in\mathcal D(\Omega)\Big\}.
\end{align}
For more information on fractional order Sobolev spaces, we refer to \cite{NPV,Gris,JW,War} and their references.

Next, we give a rigorous definition of the fractional Laplace operator. Let 
\begin{align*}
\mathcal L_s^{1}(\RR^N):=\left\{u:\RR^N\to\RR\;\mbox{ measurable},\; \int_{\RR^N}\frac{|u(x)|}{(1+|x|)^{N+2s}}\;dx<\infty\right\}.
\end{align*}
For $u\in \mathcal L_s^{1}(\RR^N)$ and $\varepsilon>0$ we set
\begin{align*}
(-\Delta)_\varepsilon^s u(x):= C_{N,s}\int_{\{y\in\RR^N:\;|x-y|>\varepsilon\}}\frac{u(x)-u(y)}{|x-y|^{N+2s}}\;dy,\;\;x\in\RR^N,
\end{align*}
where $C_{N,s}$ is a normalization constant, given by
\begin{align*}
C_{N,s}:=\frac{s2^{2s}\Gamma\left(\frac{2s+N}{2}\right)}{\pi^{\frac
N2}\Gamma(1-s)}.
\end{align*}
The {\bf fractional Laplacian}  $(-\Delta)^s$ is defined by the following singular integral:
\begin{align}\label{fl_def}
(-\Delta)^su(x)=C_{N,s}\,\mbox{P.V.}\int_{\RR^N}\frac{u(x)-u(y)}{|x-y|^{N+2s}}\;dy=\lim_{\varepsilon\downarrow 0}(-\Delta)_\varepsilon^s u(x),\;\;x\in\RR^N,
\end{align}
provided that the limit exists. 
We notice that if $0<s<1/2$ and $u$ is smooth, for example bounded and Lipschitz continuous on $\RR^N$, then the integral in \eqref{fl_def} is in fact not really singular near $x$ (see e.g. \cite[Remark 3.1]{NPV}). Moreover,  $\mathcal L_s^{1}(\RR^N)$ is the right space for which $ v:=(-\Delta)_\varepsilon^s u$ exists for every $\varepsilon>0$, $v$ being also continuous at the continuity points of  $u$.  

For more details on the fractional Laplace operator we refer to \cite{BBC,Caf1,Caf3,NPV,GW-CPDE,War,War-In} and their references.

\subsection{The Dirichlet problem for the fractional Laplacian}\label{sec-33}
Now assume that $\Omega$ is a bounded domain with a Lipschitz continuous boundary and consider the following Dirichlet problem:
\begin{equation}\label{EDP}
\begin{cases}
(-\Delta)^su=0\;\;&\mbox{ in }\;\Omega,\\
u=g&\mbox{ in }\;\RR^N\setminus\Omega.
\end{cases}
\end{equation}

\begin{definition}\label{def-sol}
Let $g\in W^{s,2}(\Omc)$ and $\widetilde g\in W^{s,2}(\RR^N)$ be such that $\widetilde g|_{\Omc}=g$. A $u\in W^{s,2}(\RR^N)$ is said to be a weak solution of \eqref{EDP} if $u-\widetilde g\in W_0^{s,2}(\bOm)$ and 
\begin{align*}
\int_{\RR^N}\int_{\RR^N}\frac{(u(x)-u(y))(v(x)-v(y))}{|x-y|^{N+2s}}\;dxdy=0,
\end{align*}
for every $v\in W_0^{s,2}(\bOm)$.
\end{definition}

The following result is taken from \cite{Grub} (see also \cite{GSU,VE}).

\begin{proposition}
For every $g\in W^{s,2}(\RR^N\setminus\Omega)$, there is a unique $u\in W^{s,2}(\RR^N)$ satisfying \eqref{EDP} in the sense of Definition \ref{def-sol}. In addition, there is a constant $C>0$ such that
\begin{align}\label{E-Neu}
\|u\|_{W^{s,2}(\RR^N)}\le C\|g\|_{W^{s,2}(\RR^N\setminus\Omega)}.
\end{align}
\end{proposition}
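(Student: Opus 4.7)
My plan is to recast the problem as a homogeneous problem with source via an extension, and then apply the Lax--Milgram theorem to the resulting variational equation on $W_0^{s,2}(\bOm)$.

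First, given $g\in W^{s,2}(\Omc)$, I would invoke the standard extension result for fractional Sobolev spaces on a Lipschitz domain to obtain some $\widetilde g\in W^{s,2}(\RR^N)$ with $\widetilde g|_{\Omc}=g$ and
\begin{equation*}
\|\widetilde g\|_{W^{s,2}(\RR^N)}\le C\|g\|_{W^{s,2}(\Omc)}.
\end{equation*}
Setting $w:=u-\widetilde g$, a function $u\in W^{s,2}(\RR^N)$ solves \eqref{EDP} in the sense of Definition \ref{def-sol} if and only if $w\in W_0^{s,2}(\bOm)$ satisfies
\begin{equation*}
\mathcal E(w,v)=-\mathcal E(\widetilde g,v)\qquad\forall\,v\in W_0^{s,2}(\bOm),
\end{equation*}
where $\mathcal E(\varphi,\psi):=\int_{\RR^N}\int_{\RR^N}\frac{(\varphi(x)-\varphi(y))(\psi(x)-\psi(y))}{|x-y|^{N+2s}}\,dxdy$.

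Next I would verify that Lax--Milgram applies to the form $\mathcal E$ on the Hilbert space $W_0^{s,2}(\bOm)$. Continuity on $W^{s,2}(\RR^N)\times W^{s,2}(\RR^N)$ is immediate from Cauchy--Schwarz, and this in particular yields continuity of the right-hand side $v\mapsto -\mathcal E(\widetilde g,v)$ as a linear functional on $W_0^{s,2}(\bOm)$ with norm controlled by $\|\widetilde g\|_{W^{s,2}(\RR^N)}$. Coercivity of $\mathcal E$ on $W_0^{s,2}(\bOm)$ is the one substantive point: since $\Omega$ is bounded and elements of $W_0^{s,2}(\bOm)$ vanish on $\Omc$, the fractional Poincar\'e inequality gives $\|v\|_{L^2(\Omega)}^2\le C\,\mathcal E(v,v)$, whence $\mathcal E(v,v)$ is equivalent to $\|v\|_{W^{s,2}(\RR^N)}^2$ on $W_0^{s,2}(\bOm)$. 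This yields existence of a unique $w\in W_0^{s,2}(\bOm)$, and thus of $u=w+\widetilde g\in W^{s,2}(\RR^N)$, with
\begin{equation*}
\|w\|_{W^{s,2}(\RR^N)}\le C\|\widetilde g\|_{W^{s,2}(\RR^N)}\le C'\|g\|_{W^{s,2}(\Omc)},
\end{equation*}
from which \eqref{E-Neu} follows by the triangle inequality.

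For uniqueness, if $u_1,u_2$ are two solutions associated with the same datum $g$, then $u_1-u_2\in W_0^{s,2}(\bOm)$ and testing the variational identity against $v=u_1-u_2$ gives $\mathcal E(u_1-u_2,u_1-u_2)=0$, which by coercivity forces $u_1=u_2$. A brief remark would justify that the solution is independent of the choice of extension $\widetilde g$: any two such extensions differ by an element of $W_0^{s,2}(\bOm)$, which can be absorbed into $w$ without altering $u$. The only genuine obstacle in this plan is the fractional Poincar\'e inequality on $W_0^{s,2}(\bOm)$, but this is a well-known fact for bounded $\Omega$ that is, in any case, used elsewhere in the paper.
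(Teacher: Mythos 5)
Your argument is correct. Note that the paper does not actually prove this proposition: it is quoted from the references \cite{Grub,GSU,VE}, so there is no internal proof to compare against. Your Lax--Milgram scheme (extend $g$, subtract the extension, solve the homogeneous variational problem for the remainder) is the standard argument one finds in those references, and every step goes through. Two points are worth making explicit if you write this up. First, the set $\Omc$ on which $g$ lives is unbounded; the extension theorem you invoke applies because $\Omc$ is an open set whose boundary $\pOm$ is compact and Lipschitz, which is exactly the hypothesis under which the extension operator $W^{s,2}(\Omc)\to W^{s,2}(\RR^N)$ exists with norm depending only on $\Omega$, $N$ and $s$. Second, the fractional Poincar\'e inequality you need is particularly cheap in this setting: for $v\in W_0^{s,2}(\bOm)$ one has $v(y)=0$ for $y\in\Omc$, so with $d:=\operatorname{diam}(\Omega)$,
\begin{equation*}
\mathcal E(v,v)\ge \int_{\Omega}|v(x)|^2\left(\int_{\{|x-y|>d\}}\frac{dy}{|x-y|^{N+2s}}\right)dx
= \frac{\omega_{N-1}}{2s}\,d^{-2s}\,\|v\|_{L^2(\Omega)}^2,
\end{equation*}
which gives coercivity of $\mathcal E$ on $W_0^{s,2}(\bOm)$ directly, without appealing to any external compactness or spectral fact. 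With these two observations your proof is self-contained and yields \eqref{E-Neu} with a constant depending only on $\Omega$, $N$ and $s$, as required.
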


Throughout the following, for $g\in W^{s,2}(\Omc)$, we shall denote by $U_g$ the unique weak solution of  \eqref{EDP}.

Next, we consider the dual system to  \eqref{EDP}, that is, the Dirichlet problem
\begin{equation}\label{EDP-D}
\begin{cases}
(-\Delta)^sv=f\;\;&\mbox{ in }\;\Omega,\\
v=0&\mbox{ in }\;\RR^N\setminus\Omega.
\end{cases}
\end{equation}
It is known (see e.g. \cite{RS-DP}) that for every $f\in L^2(\Omega)$, there is a $v\in W_0^{s,2}(\bOm)$ which is the unique weak solution of \eqref{EDP-D} in the sense that
\begin{align*}
\frac{C_{N,s}}{2}\int_{\RR^N}\int_{\RR^N}\frac{(v(x)-v(y))(w(x)-w(y))}{|x-y|^{N+2s}}\;dxdy=\int_{\Omega}fw\;dx,
\end{align*}
for every $w\in W_0^{s,2}(\bOm)$. 

Throughout the rest of the article, for $f\in L^2(\Om)$, we shall denote by $V^f$ the unique weak solution of \eqref{EDP-D}.

Next, we consider the realization of $(-\Delta)^s$ in $L^2(\Omega)$ with the zero exterior condition $u=0$ in $\Omc$. More precisely, we consider the closed and bilinear form
\begin{align*}
\mathcal F(u,v):=\frac{C_{N,s}}{2}\int_{\RR^N}\int_{\RR^N}\frac{(u(x)-u(y))(v(x)-v(y))}{|x-y|^{N+2s}}\;dxdy,\;\;u,v\in W_0^{s,2}(\bOm).
\end{align*}
Let $(-\Delta)_D^s$ be the selfadjoint operator on $L^2(\Omega)$ associated with $\mathcal F$ in the sense that
\begin{equation*}
\begin{cases}
D((-\Delta)_D^s)=\Big\{u\in W_0^{s,2}(\bOm),\;\exists\;f\in L^2(\Omega),\;\mathcal F(u,v)=(f,v)_{L^2(\Omega)}\;\forall\;v\in W_0^{s,2}(\bOm)\Big\}\\
(-\Delta)_D^su=f.
\end{cases}
\end{equation*}
Then  $(-\Delta)_D^s$ is given by
\begin{align*}
D((-\Delta)_D^s)=\Big\{u\in W_0^{s,2}(\bOm):\; (-\Delta)^su\in L^2(\Omega)\Big\}, \; (-\Delta)_D^su=(-\Delta)^su.
\end{align*}
The operator $(-\Delta)_D^s$ has a compact resolvent and hence, a discrete spectrum formed with eigenvalues satisfying
\begin{align*}
0<\lambda_1\le \lambda_2\le\cdots\le\lambda_n\le\cdots\;\;\mbox{ with }\;\lim_{n\to\infty}\lambda_n=\infty.
\end{align*}
We shall denote by $\varphi_n$ the normalized eigenfunctions associated with  $\lambda_n$. We notice that $\varphi_n$ satisfies the system
\begin{equation}\label{ei-val-pro}
\begin{cases}
(-\Delta)^s\varphi_n=\lambda_n\varphi_n\;\;&\mbox{ in }\;\Omega,\\
\varphi_n=0\;&\mbox{ in }\;\Omc,
\end{cases}
\end{equation}
and $(\varphi_n)$ is total in $L^2(\Omega)$.

We can also introduce the fractional powers of $(-\Delta)_D^s$ as follows. For every $\gamma\ge 0$ we define 
\begin{align*}
\mathbb V_{s,\gamma}:=D([(-\Delta)_D^s]^\gamma)=\left\{u\in L^2(\Omega):\;\sum_{n=1}^\infty|\lambda_n^\gamma(u,\varphi_n)_{L^2(\Omega)}|^2<\infty\right\},
\end{align*}
and for $u\in\mathbb V_{s,\gamma}$ we set
\begin{align*}
[(-\Delta)_D^s]^\gamma u=\sum_{n=1}^\infty\lambda_n^\gamma(u,\varphi_n)_{L^2(\Omega)}.
\end{align*}
Let us notice that $[(-\Delta)_D^s]^\gamma$ does not coincide with $(-\Delta)_D^{s\gamma}$.
For $u\in \mathbb V_{s,\gamma}$, one has 
\begin{align*}
\|u\|_{\mathbb V_{s,\gamma}}^2=\left\|[(-\Delta)_D^s]^\gamma u\right\|_{L^2(\Omega)}^2=\sum_{n=1}^\infty|\lambda_n^\gamma(u,\varphi_n)_{L^2(\Omega)}|^2.
\end{align*}
We mention that contrary to the Laplace operator on smooth open sets where one has maximal elliptic regularity, it is known that for $(-\Delta)^s$, in general $D((-\Delta)_D^s)\not\subset W^{2s,2}(\Omega)$. More precisely, assume that $\Omega$ is smooth then we have the following.  If $0<s<\frac 12$, then by \cite[Formula (7.4)]{Grub},  $D((-\Delta)_D^s)= W_0^{2s,2}(\Omega)$. But if $\frac 12\le s<1$, an example has been given in \cite[Remark 7.2]{RS-DP} where $D((-\Delta)_D^s)\not\subset W^{2s,2}(\Omega)$. We also refer to \cite{BWZ2,BWZ1,RS-DP} for more details on some local regularity results. We also notice that if $0<\gamma<1$, then $\mathbb V_{s,\gamma}=[D((-\Delta)_D^s),L^2(\Omega)]_{1-\gamma}$, the complex interpolation space.
Recall that we have the continuous embedding
\begin{align}\label{27-2}
D((-\Delta)_D^s)  \hookrightarrow W_0^{s,2}(\bOm) \hookrightarrow W^{s,2}(\RR^N).
\end{align}
 Exploiting \eqref{27-2}, we get that if $0<\gamma< \frac 14$, then 
\begin{equation}\label{27}
\mathbb V_{s,1-\gamma}\hookrightarrow W^{s,2}(\RR^N),
\end{equation}
and 
\begin{align}\label{27-1}
W_0^{s,2}(\bOm)\hookrightarrow \mathbb V_{s,\gamma}.
\end{align}

Throughout the following, for a measurable set $E\subseteq\RR^N$, we shall denote by $(\cdot,\cdot)_{L^2(E)}$ the scalar product in $L^2(E)$.

From now on, without any mention, we assume that $\Omega\subset\RR^N$ is a bounded domain with a Lipschitz continuous boundary.  

\subsection{The unique continuation principle}\label{sec-34}

For $u\in W^{s,2}(\RR^N)$ we introduce the {\bf nonlocal normal derivative $\mathcal N_s$} given by 
\begin{align}\label{NLND}
\mathcal N_su(x):=C_{N,s}\int_{\Omega}\frac{u(x)-u(y)}{|x-y|^{N+2s}}\;dy,\;\;\;x\in\RR^N\setminus\bOm.
\end{align}
Since equality has to be understood in a.e., we have that \eqref{NLND} is the same as for a.e. $x\in\Omc$.

The following result is taken from \cite[Lemma 3.2]{GSU}.

\begin{lemma}\label{lem-GSU}
The operator  $\mathcal N_s$ maps $W^{s,2}(\RR^N)$ into $W_{\rm loc}^{s,2}(\Omc)$.
\end{lemma}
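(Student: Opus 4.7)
The key observation is that the singular kernel $|x-y|^{-(N+2s)}$ defining $\mathcal N_s$ becomes perfectly smooth once $x$ is separated from $\bOm$. The plan is therefore to localize: for a fixed test function $\varphi\in\mathcal D(\Omc)$ (so $\supp\varphi$ is compact and $\mathrm{dist}(\supp\varphi,\bOm)>0$) we show that $\varphi\,\mathcal N_s u\in W^{s,2}(\Omc)$, which by the definition \eqref{local} of the local space is exactly what must be established.

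\textbf{Key decomposition and smoothness of the kernels.} Set $K:=\supp\varphi$ and $\delta:=\mathrm{dist}(K,\bOm)>0$, and fix an open neighbourhood $U\supset K$ still at positive distance from $\bOm$. Since $\Omega$ is bounded and the kernel $y\mapsto|x-y|^{-(N+2s)}$ is uniformly bounded on $\Omega$ for $x\in U$, linearity of the integral gives
\[
\mathcal N_s u(x)\;=\;C_{N,s}\bigl[u(x)A(x)-B(x)\bigr],\qquad x\in U,
\]
where $A(x):=\int_\Omega|x-y|^{-(N+2s)}\,dy$ and $B(x):=\int_\Omega u(y)|x-y|^{-(N+2s)}\,dy$. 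Because $u|_\Omega\in L^1(\Omega)$ (from $u\in W^{s,2}(\RR^N)\hookrightarrow L^2(\RR^N)$ and boundedness of $\Omega$) and every $x$-derivative of the kernel is uniformly bounded on $U\times\Omega$, differentiation under the integral sign yields $A,B\in C^\infty(U)$.

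\textbf{Conclusion and the only nontrivial point.} Multiplying the decomposition by $\varphi$ gives
\[
\varphi\,\mathcal N_s u\;=\;C_{N,s}\bigl[(\varphi A)\,u|_{\Omc}-\varphi B\bigr].
\]
The term $\varphi B$ belongs to $\mathcal D(\Omc)\subset W^{s,2}(\Omc)$. For the first term, $\varphi A\in\mathcal D(\Omc)$, and $u|_{\Omc}\in W^{s,2}(\Omc)$ because the Gagliardo seminorm of $u$ over $\Omc$ is dominated by its seminorm over $\RR^N$. Invoking the standard Leibniz-type $W^{s,2}$ product rule for multiplication by a $C^\infty$ compactly supported function then completes the argument. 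This product rule---the only step requiring actual verification---is proved by a direct estimate of the Gagliardo seminorm, splitting the crossterm $\int_{\Omc}\int_{\Omc}u(y)^2|(\varphi A)(x)-(\varphi A)(y)|^2|x-y|^{-(N+2s)}\,dx\,dy$ into a near-diagonal piece controlled by the Lipschitz bound on $\varphi A$ and a far-field piece controlled by its $L^\infty$ bound together with the finite integral $\int_{|z|>1}|z|^{-N-2s}\,dz$. There is no substantive obstacle beyond this routine verification.
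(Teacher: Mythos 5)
The paper does not prove this lemma at all; it is quoted verbatim from \cite[Lemma 3.2]{GSU}, and your argument is essentially the proof given there: split off the two terms $u(x)\int_\Omega|x-y|^{-(N+2s)}dy$ and $\int_\Omega u(y)|x-y|^{-(N+2s)}dy$, observe that away from $\bOm$ the kernel is smooth with bounded derivatives, and finish with the standard Gagliardo-seminorm product estimate for multiplication by a $C_c^\infty$ function (your near-diagonal/far-field splitting of the cross term is the right computation, using $2-2s>0$). So the proposal is correct and matches the cited source's approach. One caveat worth recording: the paper's definition \eqref{local} literally requires membership in $L^2(\Omc)$ globally, and the paper later infers $\mathcal N_su\in L^2(\Omc)$ from this lemma; your proof (like the one in \cite{GSU}) only yields local square-integrability, since near $\partial\Omega$ the term $u(x)\int_\Omega|x-y|^{-(N+2s)}dy$ behaves like $u(x)\,\mathrm{dist}(x,\Omega)^{-2s}$ and global $L^2$ integrability up to the boundary does not follow. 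The target space must therefore be read with only local square-integrability, which is what the lemma is actually used for in the localized statements.
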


It follows from Lemma \ref{lem-GSU} and \eqref{local}, that if $u\in W^{s,2}(\RR^N)$, then the function $\mathcal N_su\in L^2(\Omc)$.
Using this fact, \cite[Lemma 3.3]{DRV}, and a density argument we get the following result which will play an important role in the proof of our main results.

\begin{proposition}
 Let $u\in W^{s,2}(\RR^N)$ be such that $(-\Delta)^su\in L^2(\Omega)$. Then for every $v\in W^{s,2}(\RR^N)$, the identity
\begin{align}\label{Int-Part}
\frac{C_{N,s}}{2}&\int\int_{\RR^{2N}\setminus(\Omc)^2}\frac{(u(x)-u(y))(v(x)-v(y))}{|x-y|^{N+2s}}\;dxdy\notag\\
=&\int_{\Omega}v(-\Delta)^su\;dx+\int_{\Omc}v\mathcal N_su\;dx,
\end{align}
holds, where
\begin{align*}
\RR^{2N}\setminus(\Omc)^2=(\Omega\times\Omega)\cup(\Omega\times(\Omc))\cup((\Omc)\times\Omega).
\end{align*}
\end{proposition}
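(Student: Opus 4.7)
The plan is to establish \eqref{Int-Part} first for smooth compactly supported $u,v$ by a direct computation, and then to extend to the full generality of the statement by a density-plus-continuity argument in the $v$-variable, holding $u$ fixed. The smooth-function identity is the content of \cite[Lemma 3.3]{DRV}, so the main work is the extension.

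For the smooth case, one starts from the fully symmetric global formula
\[
\int_{\RR^N} v(-\Delta)^s u\,dx = \frac{C_{N,s}}{2}\iint_{\RR^{2N}} \frac{(u(x)-u(y))(v(x)-v(y))}{|x-y|^{N+2s}}\,dx\,dy,
\]
valid for $u,v \in C_c^\infty(\RR^N)$. Splitting $\RR^{2N} = (\RR^{2N}\setminus(\Omc)^2) \sqcup (\Omc)^2$ on the right and $\RR^N = \Omega \sqcup \Omc$ on the left, and writing, for $x \in \Omc$,
\[
(-\Delta)^s u(x) = \mathcal N_s u(x) + C_{N,s}\,\mbox{P.V.}\int_{\Omc} \frac{u(x)-u(y)}{|x-y|^{N+2s}}\,dy,
\]
a Fubini-and-symmetrization step identifies the $(\Omc)^2$ double integral on the right with the integral over $\Omc$ of $v$ times the principal-value part on the left. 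What survives after cancellation is exactly \eqref{Int-Part}.

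For the extension I fix $u$ satisfying the hypotheses and regard both sides of \eqref{Int-Part} as linear functionals of $v \in W^{s,2}(\RR^N)$. The LHS is controlled by Cauchy--Schwarz by $C\|u\|_{W^{s,2}(\RR^N)}\|v\|_{W^{s,2}(\RR^N)}$; the first RHS term by $\|(-\Delta)^s u\|_{L^2(\Omega)}\|v\|_{L^2(\Omega)}$, which is where the hypothesis $(-\Delta)^s u \in L^2(\Omega)$ is used; and the second RHS term by $\|\mathcal N_s u\|_{L^2(\Omc)}\|v\|_{L^2(\Omc)}$, which is finite by the remark following Lemma \ref{lem-GSU}. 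Both sides are therefore continuous linear functionals of $v$ on $W^{s,2}(\RR^N)$, agree on the dense subspace $C_c^\infty(\RR^N)$, and hence coincide on all of $W^{s,2}(\RR^N)$. The main delicate point, which motivates this choice, is that $W^{s,2}$-convergence $u_n \to u$ does not in general imply $(-\Delta)^s u_n \to (-\Delta)^s u$ in $L^2(\Omega)$, so approximating $u$ would complicate the first RHS term; by contrast, approximating only $v$ is straightforward and uses exactly the three bounds just listed.
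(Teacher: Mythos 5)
Your overall strategy coincides with the paper's, which disposes of this proposition in a single sentence: the smooth case is \cite[Lemma 3.3]{DRV}, the membership $\mathcal N_s u\in L^2(\Omc)$ comes from Lemma \ref{lem-GSU} together with \eqref{local}, and the general case follows ``by a density argument.'' Your symmetrization computation for the smooth case is correct, and the three continuity bounds in $v$ (Cauchy--Schwarz on the full seminorm for the left-hand side, $\|(-\Delta)^su\|_{L^2(\Omega)}\|v\|_{L^2(\Omega)}$ and $\|\mathcal N_su\|_{L^2(\Omc)}\|v\|_{L^2(\Omc)}$ for the right) are exactly the estimates such a density argument needs.

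There is, however, a gap in how you set up the density step. You fix a general $u\in W^{s,2}(\RR^N)$ with $(-\Delta)^su\in L^2(\Omega)$ and assert that the two sides of \eqref{Int-Part} ``agree on the dense subspace $C_c^\infty(\RR^N)$'' of test functions $v$. But \cite[Lemma 3.3]{DRV} requires regularity of \emph{both} arguments, so it does not supply this base case when $u$ is merely in $W^{s,2}(\RR^N)$ with square-integrable fractional Laplacian in $\Omega$, and you give no other argument for it. Your own closing observation --- that $W^{s,2}$-convergence $u_n\to u$ does not force $(-\Delta)^su_n\to(-\Delta)^su$ in $L^2(\Omega)$ --- is precisely why this cannot be waved away: declining to approximate $u$ does not make the identity for nonsmooth $u$ against smooth $v$ true by default. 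To close the gap you must either (i) prove \eqref{Int-Part} directly for the given $u$ and each $v\in C_c^\infty(\RR^N)$, rerunning the Fubini--symmetrization computation under the weaker hypotheses (using that $\mathcal N_su\in L^2(\Omc)$ and that the restriction to $\Omega$ of the distributional $(-\Delta)^su$ is the given $L^2$ function), or (ii) approximate $u$ in the graph norm $\|u\|_{W^{s,2}(\RR^N)}+\|(-\Delta)^su\|_{L^2(\Omega)}$ and establish density of smooth functions for that norm. The paper does not say which of these its ``density argument'' means, but one of them is genuinely needed, and your proposal as written supplies neither.
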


We have the following result.
\begin{lemma}
Let $g\in W^{s,2}(\Omc)$ and let $U_g\in W^{s,2}(\RR^N)$ be the unique weak solution of \eqref{EDP}. Then  for every $f\in L^2(\Omega)$, we have that
\begin{align}\label{sol-equi}
\int_{\Omega}fU_g\;dx+\int_{\Omc}g\mathcal N_sV^f\;dx=0,
\end{align}
where we recall that $V^f\in W_0^{s,2}(\bOm)$ is the unique weak solution of \eqref{EDP-D}.
\end{lemma}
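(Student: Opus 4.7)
The plan is to exploit the symmetry of the bilinear form appearing in the integration by parts identity \eqref{Int-Part} and apply it twice to the pair $(U_g, V^f)$. First I would check that both functions satisfy the hypotheses of \eqref{Int-Part}: by \eqref{E-Neu}, $U_g \in W^{s,2}(\RR^N)$ with $(-\Delta)^s U_g = 0$ in $\Omega$ (interpreted weakly, but $0 \in L^2(\Omega)$ trivially); and $V^f \in W_0^{s,2}(\bOm) \subset W^{s,2}(\RR^N)$ with $(-\Delta)^s V^f = f \in L^2(\Omega)$. Hence \eqref{Int-Part} is applicable in both orderings.

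The crucial observation is that the double integral on the left-hand side of \eqref{Int-Part} is symmetric under $(u,v)\leftrightarrow(v,u)$, since $(u(x)-u(y))(v(x)-v(y)) = (v(x)-v(y))(u(x)-u(y))$ and the kernel $|x-y|^{-(N+2s)}$ is symmetric. Applying \eqref{Int-Part} once with $(u,v)=(U_g, V^f)$ and once with $(u,v)=(V^f, U_g)$ and equating the right-hand sides yields the nonlocal Green-type identity
\begin{align*}
\int_{\Omega} V^f (-\Delta)^s U_g\;dx + \int_{\Omc} V^f \mathcal N_s U_g\;dx
= \int_{\Omega} U_g (-\Delta)^s V^f\;dx + \int_{\Omc} U_g \mathcal N_s V^f\;dx.
\end{align*}
Substituting the defining properties, the left-hand side vanishes because $(-\Delta)^s U_g = 0$ in $\Omega$ and $V^f = 0$ in $\Omc$; on the right-hand side $(-\Delta)^s V^f = f$ in $\Omega$ and $U_g = g$ in $\Omc$, producing precisely \eqref{sol-equi}.

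There is no real obstacle here: the heavy lifting was already done when establishing the integration by parts formula \eqref{Int-Part} and the mapping property of $\mathcal N_s$ via Lemma \ref{lem-GSU}. The only point that deserves a brief verification is that all four integrals in the symmetric identity above are absolutely convergent, which follows because $U_g, V^f \in L^2$ on the respective domains and because, as noted after Lemma \ref{lem-GSU}, $\mathcal N_s$ sends $W^{s,2}(\RR^N)$ into $L^2(\Omc)$ in this setting. Once these finite-ness statements are in hand, the proof reduces to a single line of substitution.
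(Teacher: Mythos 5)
Your proof is correct and follows essentially the same route as the paper: the author likewise applies \eqref{Int-Part} to the pair $(U_g,V^f)$ in both orderings, uses the symmetry of the double integral to cancel the left-hand sides, and then substitutes $(-\Delta)^sU_g=0$ in $\Omega$, $V^f=0$ in $\Omc$, $(-\Delta)^sV^f=f$ and $U_g=g$ in $\Omc$ to obtain \eqref{sol-equi}. No gaps.
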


\begin{proof}
Since $U_g\in W^{s,2}(\RR^N)$,  $(-\Delta)^sU_g=0$ in $\Omega$ and $V^f\in  W^{s,2}(\RR^N)$ with $V^f=0$ in $\Omc$, it follows from \eqref{Int-Part} that
\begin{align}\label{eq26}
\frac{C_{N,s}}{2}&\int\int_{\RR^{2N}\setminus(\Omc)^2}\frac{(U_g(x)-U_g(y))(V^f(x)-V^f(y))}{|x-y|^{N+2s}}\;dxdy\notag\\
&=\int_{\Omega}V^f(-\Delta)^sU_g\;dx+\int_{\Omc}V^f\mathcal N_sU_g\;dx=0.
\end{align}

Similarly, since $V^f\in W^{s,2}(\RR^N)$,  $(-\Delta)^s V^f=f\in L^2(\Omega)$, and $U_g\in W^{s,2}(\RR^N)$, it follows from \eqref{Int-Part} that
\begin{align}\label{eq27}
\frac{C_{N,s}}{2}&\int\int_{\RR^{2N}\setminus(\Omc)^2}\frac{(U_g(x)-U_g(y))(V^f(x)-V^f(y))}{|x-y|^{N+2s}}\;dxdy\notag\\
&=\int_{\Omega}U_g(-\Delta)^sV^f\;dx+\int_{\Omc}U_g\mathcal N_sV^f\;dx\notag\\
&=\int_{\Omega}fU_g\;dx+\int_{\Omc}g\mathcal N_sV^f\;dx.
\end{align}
Subtracting \eqref{eq26} and \eqref{eq27} we get \eqref{sol-equi}. The proof is finished.
\end{proof}

We notice that since $V^f=0$ in $\Omc$, it follows that 
\begin{align*}
&\int\int_{\RR^{2N}\setminus(\Omc)^2}\frac{(U_g(x)-U_g(y))(V^f(x)-V^f(y))}{|x-y|^{N+2s}}\;dxdy\notag\\
=&\int_{\RR^N}\int_{\RR^{N}}\frac{(U_g(x)-U_g(y))(V^f(x)-V^f(y))}{|x-y|^{N+2s}}\;dxdy.
\end{align*}

\begin{remark}
{\em
We mention the following facts.
\begin{enumerate}
\item If in \eqref{sol-equi}, one takes $f=\lambda_n\varphi_n$, hence, $V^f=\varphi_n$, then we get the identity
\begin{align}\label{eqA9}
\lambda_n\int_{\Omega}\varphi_nU_g\;dx+\int_{\Omc}g\mathcal N_s\varphi_n\;dx=0.
\end{align}
\item Since the operator $(-\Delta)_D^s$ is invertible, we have that for every $f\in L^2(\Om)$, the solution $V^f$ of  \eqref{EDP-D} is given by $V^f=((-\Delta)_D^s)^{-1}f$.
\end{enumerate}
}
\end{remark}
For more details we refer to \cite{BBC,DRV,GW-CPDE,Grub,RS2-2,War} and the references therein.

Next, let us denote by $\mathbb P$ the operator defined by
\begin{align*}
\mathbb P:W^{s,2}(\Omc)\to W^{s,2}(\RR^N):\; g\mapsto \mathbb Pg:=U_g,
\end{align*}
where $U_g$ is the unique weak  solution of \eqref{EDP}.

We have the following unique continuation principle which is the main result of this section. It will play a crucial role in the proof of our main results.

\begin{theorem}\label{theo-27}
Let $\mathcal O\subset \Omc$ be an arbitrary non-empty open set. Let $\lambda>0$ and let $\varphi\in D((-\Delta)_D^s)$ satisfy
\begin{equation}\label{e217}
\begin{cases}
(-\Delta)_D^s\varphi=\lambda\varphi\;\;\;&\mbox{ in }\;\Omega,\\
\mathcal N_s\varphi=0&\mbox{ in } \;\mathcal O.
\end{cases}
\end{equation}
Then $\varphi=0$ in $\RR^N$.
\end{theorem}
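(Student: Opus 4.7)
My proof strategy rests on a simple but useful observation: on the exterior of $\Omega$, where $\varphi$ already vanishes, the nonlocal normal derivative $\mathcal N_s\varphi$ coincides with $(-\Delta)^s\varphi$. This reduces the statement to a known strong unique continuation property of the fractional Laplacian.

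First I would record that $\varphi\in D((-\Delta)_D^s)\subset W_0^{s,2}(\bOm)\subset W^{s,2}(\RR^N)$, so $\varphi=0$ in $\Omc$, and in particular $\varphi=0$ in $\mathcal O$. Next, for $x\in\Omc$, since $\varphi(x)=0$, the very definition \eqref{NLND} of $\mathcal N_s$ gives
\begin{align*}
\mathcal N_s\varphi(x)=C_{N,s}\int_{\Omega}\frac{\varphi(x)-\varphi(y)}{|x-y|^{N+2s}}\,dy=-C_{N,s}\int_{\Omega}\frac{\varphi(y)}{|x-y|^{N+2s}}\,dy.
\end{align*}
On the other hand, for the same $x\in\Omc$ the principal value in \eqref{fl_def} is no longer singular, and since $\varphi=0$ on $\Omc$,
\begin{align*}
(-\Delta)^s\varphi(x)=C_{N,s}\int_{\RR^N}\frac{\varphi(x)-\varphi(y)}{|x-y|^{N+2s}}\,dy=-C_{N,s}\int_{\Omega}\frac{\varphi(y)}{|x-y|^{N+2s}}\,dy.
\end{align*}
Hence $(-\Delta)^s\varphi=\mathcal N_s\varphi$ pointwise a.e. on $\Omc$, and the assumption $\mathcal N_s\varphi=0$ in $\mathcal O$ translates into $(-\Delta)^s\varphi=0$ in $\mathcal O$.

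At this point I have a function $\varphi\in W^{s,2}(\RR^N)$ such that both $\varphi$ and $(-\Delta)^s\varphi$ vanish on the non-empty open set $\mathcal O\subset\Omc$. I would then invoke the strong unique continuation principle for the fractional Laplacian (of Runge/Ghosh--Salo--Uhlmann type, as in \cite{GSU}, or via a Caffarelli--Silvestre extension and Almgren monotonicity): if $u\in W^{s,2}(\RR^N)$ satisfies $u=0$ and $(-\Delta)^su=0$ on a common non-empty open set, then $u\equiv 0$ in $\RR^N$. Applying this with $u=\varphi$ yields $\varphi\equiv 0$ in $\RR^N$, which is the claim. Note that the eigenvalue equation $(-\Delta)^s\varphi=\lambda\varphi$ in $\Omega$ is never directly used in the argument; it enters only indirectly, since it guarantees $\varphi\in D((-\Delta)_D^s)$ and thus the regularity needed for the above pointwise identification to make sense.

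The main obstacle is the last step: the strong unique continuation / Runge approximation property for $(-\Delta)^s$ is a genuinely nontrivial result, not an elementary consequence of the definitions. Everything else is a clean manipulation of \eqref{NLND}, \eqref{fl_def}, and the inclusion $\varphi\in W_0^{s,2}(\bOm)$. Once the identity $\mathcal N_s\varphi=(-\Delta)^s\varphi$ on $\Omc$ is established, the eigenfunction equation becomes irrelevant and the whole argument reduces to a statement purely about elements of $W^{s,2}(\RR^N)$ vanishing together with their fractional Laplacian on an open set.
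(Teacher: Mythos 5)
Your proof is correct, and it takes a genuinely different --- and shorter --- route than the paper. The paper argues by duality and density: it first shows, via Hahn--Banach and an adjoint identity, that the space $\mathbb W$ of restrictions to $\Omega$ of the harmonic extensions $U_g$ with $g\in\mathcal D(\mathcal O)$ is dense in $L^2(\Omega)$, and then pairs the eigenfunction against this space using \eqref{eqA9}; the unique continuation theorem \cite[Theorem 1.2]{GSU} is invoked there for the auxiliary function $V^f$. You instead observe that every $x\in\mathcal O$ lies in the open set $\RR^N\setminus\bOm$ (an open subset of $\Omc$ cannot meet $\pOm$), so the principal value in \eqref{fl_def} is not singular there and, since $\varphi$ vanishes on $\Omc$, the integrals defining $(-\Delta)^s\varphi(x)$ and $\mathcal N_s\varphi(x)$ in \eqref{fl_def} and \eqref{NLND} coincide; hence $\varphi=(-\Delta)^s\varphi=0$ in $\mathcal O$ and \cite[Theorem 1.2]{GSU} applies directly to $\varphi$. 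Both arguments rest on exactly the same external input, namely \cite[Theorem 1.2]{GSU}. Your route buys brevity and a strictly stronger conclusion: neither $\lambda>0$ nor the eigenvalue equation is used, so in fact any $\varphi\in W_0^{s,2}(\bOm)$ with $\mathcal N_s\varphi=0$ in some non-empty open $\mathcal O\subseteq\Omc$ must vanish identically. The paper's route buys, as a by-product, the $L^2(\Omega)$-density of $\mathbb W$ (its Step 1), a Runge-type approximation property of independent interest for the controllability analysis. The only point you should make explicit is that the pointwise formula you use for $(-\Delta)^s\varphi$ on $\mathcal O$ agrees with the distributional fractional Laplacian restricted to $\RR^N\setminus\bOm$; this is immediate because $\varphi\in L^1(\Omega)$ and each $x\in\mathcal O$ is at positive distance from $\supp\varphi\subseteq\bOm$, so the hypothesis of \cite[Theorem 1.2]{GSU} is verified in the correct sense.
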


\begin{proof}
We prove the result in two steps.

{\bf Step 1}. First we define the space
\begin{align*}
\mathbb W:=\Big\{(U_g)|_{\Omega}:\; U_g=\mathbb Pg,\;\; g\in\mathcal D(\mathcal O)\Big\}.
\end{align*}
We claim that $\mathbb W$ is dense in $L^2(\Omega)$. Indeed, by the Hahn-Banach theorem, it is sufficient to show that if $f\in L^2(\Omega)$ satisfies 
\begin{equation}\label{eq-v}
\int_{\Om}fw\;dx=0\;\;\mbox{ for all }\; w\in\mathbb W, 
\end{equation}
then $f\equiv 0$ in $\Omega$. Let $f$ satisfy \eqref{eq-v}. Then
\begin{equation*}
\int_{\Omega}f\mathbb Pg\;dx=0,\;\;g\in\mathcal D(\mathcal O).
\end{equation*}
Let $\mathbb P|_{\Omega}$ be the restriction of $\mathbb P$ on $\Omega$. That is, $(\mathbb P|_{\Omega})g=(\mathbb Pg)|_{\Omega}$. We show that the formal adjoint of $\mathbb P|_{\Omega}$ is given for $g\in\mathcal D(\mathcal O)$ by
\begin{equation}\label{e53}
\int_{\Omega}f\mathbb (Pg)|_{\Omega}\;dx=-\frac{C_{N,s}}{2}\int_{\RR^N}\int_{\RR^N}\frac{(V^f(x)-V^f(y))(g(x)-g(y))}{|x-y|^{N+2s}}\;dxdy,
\end{equation}
where we recall that $V^f\in\ W_0^{s,2}(\bOm)$ is the unique solution of \eqref{EDP-D}.
We notice that \eqref{e53} is equivalent to
\begin{align*}
-\frac{C_{N,s}}{2}\int_{\RR^N}\int_{\RR^N}\frac{(V^f(x)-V^f(y))(g(x)-g(y))}{|x-y|^{N+2s}}\;dxdy=\int_{\Omega}f w\;dx,\;\forall\; w\in W_0^{s,2}(\bOm).
\end{align*}
Let $g\in\mathcal D(\mathcal O)$ and $U_g=\mathbb Pg\in W^{s,2}(\RR^N)$. Then $(U_g-g)\in W_0^{s,2}(\bOm)$. Therefore, using that $\mathbb Pg=U_g$ is the solution of \eqref{EDP}, $g=0$ in $\Omega$ and $V^f\in W_0^{s,2}(\bOm)$ is the solution of \eqref{EDP-D} we get that
\begin{align*}
\int_{\Omega}f\mathbb (Pg)|_{\Omega}\;dx=&\int_{\Omega}f(U_g-g)\;dx\\
=&\frac{C_{N,s}}{2}\int_{\RR^N}\int_{\RR^N}\frac{(V^f(x)-V^f(y))((U_g-g)(x)-(U_g-g)(y))}{|x-y|^{N+2s}}\;dxdy\\
=&\frac{C_{N,s}}{2}\int_{\RR^N}\int_{\RR^N}\frac{(V^f(x)-V^f(y))(U_g(x)-U_g(y))}{|x-y|^{N+2s}}\;dxdy\\
&-\frac{C_{N,s}}{2}\int_{\RR^N}\int_{\RR^N}\frac{(V^f(x)-V^f(y))(g(x)-g(y))}{|x-y|^{N+2s}}\;dxdy\\
=&-\frac{C_{N,s}}{2}\int_{\RR^N}\int_{\RR^N}\frac{(V^f(x)-V^f(y))(g(x)-g(y))}{|x-y|^{N+2s}}\;dxdy.
\end{align*}
In the last equality we have used Definition \ref{def-sol} since $U_g$ is the solution of \eqref{EDP}. We have shown \eqref{e53}.

Combining \eqref{eq-v} and \eqref{e53} we get that
\begin{align*}
\frac{C_{N,s}}{2}\int_{\RR^N}\int_{\RR^N}\frac{(V^f(x)-V^f(y))(g(x)-g(y))}{|x-y|^{N+2s}}\;dxdy=0,\;\;g\in\mathcal D(\mathcal O).
\end{align*}
The preceding identity implies that
\begin{align*}
0=\int_{\RR^N}(-\Delta)^{\frac s2}V^f(-\Delta)^{\frac s2}g\;dx=\int_{\RR^N}g(-\Delta)^sV^f\;dx,\;\;g\in \mathcal D(\mathcal O).
\end{align*}
Since $g=0$ in $\RR^N\setminus\mathcal O$, the preceding identity implies that $V^f\in W_0^{s,2}(\bOm)$ satisfies
\begin{align*}
V^f=(-\Delta)^sV^f=0\;\;\mbox{ in }\;\mathcal O.
\end{align*}
It follows from \cite[Theorem 1.2]{GSU} that $V^f=0$. Thus $f=0$ and the claim is proved.

{\bf Step 2}. Now let $\lambda>0$ and let $\varphi\in W_0^{s,2}(\bOm)$ satisfy \eqref{e217}. Let $g\in\mathcal D(\mathcal O)$. Since $\mathcal N_s\varphi=0$ in $\mathcal O$ and $g=0$ in $(\Omc)\setminus\mathcal O$, it follows from \eqref{eqA9} that
\begin{align*}
0=\lambda\int_{\Omega}\varphi U_g\;dx+\int_{\Omc}g\mathcal N_s\varphi\;dx=\lambda \int_{\Omega}\varphi U_g\;dx.
\end{align*}
Since $\lambda>0$, this implies  that for every $U_g\in\mathbb W$,
\begin{align*}
\int_{\Omega}\varphi U_g\;dx=0.
\end{align*}
Since $\mathbb W$ is dense in $L^2(\Omega)$, it follows from the preceding identity that $\varphi=0$ in $\Omega$. Since $\varphi=0$ in $\Omc$, we have that $\varphi=0$ in $\RR^N$. The proof is finished.
\end{proof}



We conclude this section with the following remark.

\begin{remark}
{\em
 We mention the following facts.
\begin{enumerate}
\item Firstly, we notice that to prove the corresponding result of Theorem \ref{theo-27} for the Laplace operator or general second order elliptic operators, one usually uses the associated Pohozaev identity. Since the expression $\mathcal N_s\varphi$ does not appear in the Pohozaev identity for the fractional Laplace operator (see e.g. \cite{RS-Po2,RS-Po}), then this identity cannot be used to obtain Theorem \ref{theo-27}.

\item Secondly,  it has been shown in \cite[Proposition 4.2]{Ros} that if $\lambda>0$ and $\varphi\in D((-\Delta)_D^s)$ satisfy
\begin{equation*}
\begin{cases}
(-\Delta)_D^s\varphi=\lambda\varphi\;\;\;&\mbox{ in }\;\Omega,\\
\displaystyle\left.\frac{\varphi}{\rho^s}\right|_{\pOm}=0&\mbox{ on } \;\pOm,
\end{cases}
\end{equation*}
where $\rho(x):=\operatorname{dist}(x,\pOm)$, $x\in\Om$, then $u=0$ in $\RR^N$. The proof of this unique continuation property is also done by using the above mentioned Pohozaev identity for the fractional Laplacian.

\item Finally, we notice that even if the two notions of normal derivation, $\mathcal N_su$ and $\left.\frac{u}{\rho^s}\right|_{\pOm}$, of a function $u$, are different, at the limit they coincide in the sense that for all $u\in C^1(\bOm)\cap C_0^s(\Omega)$ and $v\in C_0^1(\RR^N)$, the following identities
\begin{align*}
\lim_{s\uparrow 1^-} \int_{\pOm}v\frac{u}{\rho^s}\;d\sigma=\lim_{s\uparrow 1^-}\int_{\RR^N\setminus\Om}v\mathcal N_su\;dx=\int_{\pOm}v\partial_\nu u\;d\sigma,
\end{align*}
hold, where $\partial_\nu u$ is the classical normal derivative of the function $u$. We refer to \cite[Section 5]{DRV} for more details.
\end{enumerate}
}
\end{remark}

\section{Some well-posedness results} \label{sec-proof-in}

In this section we study the existence, regularity and the representation of solutions to the systems \eqref{Int-eq} and \eqref{dual}. 
We start with \eqref{Int-eq}. Throughout the remainder of the paper, for $\beta>0$, $E_{\alpha,\beta}$ shall denote the Mittag-Leffler function defined in \eqref{mm}. We also mention that there are several references on abstract Cauchy problems of fractional order that give the existence of solutions of \eqref{Int-eq-2} and their representation in terms of the Mittag-Leffler functions. But for \eqref{Int-eq} there is no reference available. For this reason we will give the full proof. Throughout the following $(\varphi_n)_{n\in\NN}$ denotes the orthonormal basis of normalized eigenfunctions of $(-\Delta)_D^s$ associated with the eigenvalues $(\lambda_n)_{n\in\NN}$.

\subsection{Existence and representation of solutions to the system \eqref{Int-eq}}

We have the following result of existence and representation of solutions.

\begin{theorem}\label{theo-weak}
Let  $g\in \mathcal D((0,T)\times(\Omc))$. There exists a unique strong solution $u\in C^\infty([0,T]; W^{s,2}(\RR^N))$ of \eqref{Int-eq} which is given by
\begin{align}\label{rep-sol}
u(t,x)=-\sum_{n=1}^\infty\left(\int_0^t\Big(g(t-\tau,\cdot),\mathcal N_s\varphi_n\Big)_{L^2(\Omc)}\tau^{\alpha-1}E_{\alpha,\alpha}(-\lambda_n\tau^\alpha)\;d\tau\right)\varphi_n(x).
\end{align}
Moreover, the series in \eqref{rep-sol} converges in $C^m([0,T];W^{s,2}(\RR^N))$ and
\begin{align}\label{eq-ess}
&\|\partial_t^mu(t,\cdot)\|_{W^{s,2}(\RR^N)}\notag\\
\le& C\left(t^{\alpha(\gamma-1)+1}\|\partial_t^{m+1}g\|_{L^\infty((0,T);W^{s,2}(\Omc))}+\|\partial_t^mg\|_{W^{s,2}(\Omc)}\right)
\end{align}
for $m=0,1,2,\cdots,$ where $0<\gamma<\frac 14$ is a real number.
\end{theorem}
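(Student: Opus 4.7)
My plan is to project \eqref{Int-eq} onto the Dirichlet eigenbasis $(\varphi_n)$ of $(-\Delta)_D^s$, to solve each resulting scalar fractional ODE via the Laplace transform, and then to re-assemble the Fourier coefficients into a ``harmonic extension plus correction'' splitting that makes the $W^{s,2}(\RR^N)$-regularity transparent.

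Setting $u_n(t):=(u(t,\cdot),\varphi_n)_{L^2(\Omega)}$, I first test the weak form against $\varphi_n\in W_0^{s,2}(\bOm)\subset W^{s,2}(\RR^N)$. Applying the integration-by-parts identity \eqref{Int-Part} symmetrically with $(-\Delta)^s\varphi_n=\lambda_n\varphi_n$, together with $\varphi_n=0$ on $\Omc$ and $u=g$ on $\Omc$, yields
\[
((-\Delta)^s u(t,\cdot),\varphi_n)_{L^2(\Omega)}=\lambda_n u_n(t)+h_n(t),\qquad h_n(t):=\int_{\Omc}g(t,x)\,\mathcal N_s\varphi_n(x)\,dx,
\]
and since $\varphi_n$ is time-independent the problem reduces to the scalar fractional Cauchy problem
\[
\mathbb D_t^\alpha u_n(t)+\lambda_n u_n(t)=-h_n(t),\qquad u_n(0)=0,
\]
whose unique solution, read off from \eqref{lap-ml}, is exactly the convolution in \eqref{rep-sol}.

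The crucial step is to upgrade this to $W^{s,2}(\RR^N)$-regularity. A naive bound $|u_n|\lesssim\lambda_n^{-1}$ would force $\sum_n\lambda_n^{2(1-\gamma)}|u_n|^2\sim\sum\lambda_n^{-2\gamma}$, which need not converge; and using identity \eqref{eqA9} to read the sum as a $\mathbb V_{s,1-\gamma}$-norm of $U_{g(t,\cdot)}|_\Omega$ fails as well, because $U_{g(t,\cdot)}$ carries nontrivial exterior data and hence does not belong to $\mathbb V_{s,1-\gamma}$. I would therefore integrate by parts in $\tau$ in the convolution, using \eqref{Est-MLF2} and the fact that $g\in\mathcal D((0,T)\times\Omc)$ forces $g(0,\cdot)=0$, to arrive at the cleaner formula
\[
u_n(t)=(U_{g(t,\cdot)},\varphi_n)_{L^2(\Omega)}-\int_0^t(U_{\partial_\tau g(\tau,\cdot)},\varphi_n)_{L^2(\Omega)}\,E_{\alpha,1}(-\lambda_n(t-\tau)^\alpha)\,d\tau.
\]
Summing against $\varphi_n$ and using $U_{g(t,\cdot)}=g(t,\cdot)$ on $\Omc$, this assembles into the global decomposition
\[
u(t,\cdot)=U_{g(t,\cdot)}+w(t,\cdot)\quad\text{on }\RR^N,
\]
where $w(t,\cdot)\in W_0^{s,2}(\bOm)$ has Fourier coefficients $w_n(t)=-\int_0^t(U_{\partial_\tau g(\tau,\cdot)},\varphi_n)_{L^2(\Omega)}E_{\alpha,1}(-\lambda_n(t-\tau)^\alpha)\,d\tau$.

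The estimate \eqref{eq-ess} then splits into two pieces. The static piece is bounded by $\|U_{g(t,\cdot)}\|_{W^{s,2}(\RR^N)}\le C\|g(t,\cdot)\|_{W^{s,2}(\Omc)}$ from \eqref{E-Neu}, producing the second summand. For $w$ I would use the embedding \eqref{27}, the pointwise bound $|\lambda_n^{1-\gamma}E_{\alpha,1}(-\lambda_n(t-\tau)^\alpha)|\le C(t-\tau)^{\alpha(\gamma-1)}$ from \eqref{New}, Minkowski's integral inequality, and Parseval to conclude
\[
\|w(t,\cdot)\|_{W^{s,2}(\RR^N)}\le C\int_0^t(t-\tau)^{\alpha(\gamma-1)}\|U_{\partial_\tau g(\tau,\cdot)}\|_{L^2(\Omega)}\,d\tau\le C\,t^{\alpha(\gamma-1)+1}\|\partial_\tau g\|_{L^\infty((0,T);W^{s,2}(\Omc))}
\]
after one further use of \eqref{E-Neu}. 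Differentiating the convolution defining $u_n$ in $t$ and exploiting $h_n(0)=0$ gives $\partial_t u_n[g]=u_n[\partial_t g]$, so iterating and applying the $m=0$ estimate to $\partial_t^m g\in\mathcal D((0,T)\times\Omc)$ yields \eqref{eq-ess} for every $m$. The remaining verifications---that $u(0,\cdot)=0$ (since $g(0,\cdot)=0$ kills both $U_{g(0,\cdot)}$ and $w(0,\cdot)$), that the equation and exterior condition hold by construction, and that uniqueness follows from the spectral argument applied to the homogeneous problem \eqref{Int-eq-2} with zero data---are routine. The main obstacle is precisely the spectral-regularity issue flagged above: only the harmonic-extension-plus-correction splitting, rather than a direct estimate on the Mittag-Leffler series, makes the $W^{s,2}(\RR^N)$-bound possible.
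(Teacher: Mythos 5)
Your proposal is correct and follows essentially the same route as the paper: the decomposition $u=U_{g}+w$ (the paper's $U_g+Y$), the integration by parts in $\tau$ via \eqref{Est-MLF2} linking the $E_{\alpha,\alpha}$-convolution in \eqref{rep-sol} to the $E_{\alpha,1}$-convolution against $\partial_t U_g$, the identity \eqref{eqA9}, and the bound through \eqref{New}, \eqref{27}--\eqref{27-1} and \eqref{E-Neu}, with induction in $m$ via $\partial_t u[g]=u[\partial_t g]$. The only cosmetic difference is direction: you derive \eqref{rep-sol} by projecting onto the eigenbasis and then reassemble into the harmonic-extension-plus-correction splitting, whereas the paper constructs $u=U_g+Y$ first and deduces \eqref{rep-sol} afterwards; your diagnosis that the splitting, not a direct Mittag--Leffler series estimate, is what makes the $W^{s,2}(\RR^N)$ bound work is exactly the paper's Step 3.
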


\begin{proof}
Let $g\in \mathcal D((0,T)\times(\Omc))$. We prove the theorem in several steps.

{\bf Step 1}. Firstly, we show uniqueness. Assume that \eqref{Int-eq} has two solutions $u_1$, $u_2$ and let $Z:=u_1-u_2$. Then $Z$ is a solution of the system
\begin{equation}\label{eq-0}
\begin{cases}
\mathbb D_t^\alpha Z+(-\Delta)^sZ=0\;\;&\mbox{ in }\;(0,T)\times\Omega,\\
Z=0 &\mbox{ in }\;(0,T)\times(\RR^N\setminus\Omega),\\
Z(0,\cdot)=0&\mbox{ in }\;\Omega.
\end{cases}
\end{equation}
This can be rewritten as the following Cauchy problem:
\begin{equation*}
\begin{cases}
\mathbb D_t^\alpha Z+(-\Delta)_D^s Z=0\;\; &\mbox{ in }\;(0,T)\times\Omega,\\
Z(0,\cdot)=0 &\mbox{ in }\;\Omega.
\end{cases}
\end{equation*}
Thus, it follows from \cite{KLW} that the unique strong solution of \eqref{eq-0} is given by $Z=0$. Hence, $u_1=u_2$ and we have shown uniqueness.

{\bf Step 2}. Secondly, we show the existence. We prove that the expression given in \eqref{rep-sol} satisfies \eqref{Int-eq}. Indeed, let $U_g$ be the unique solution of \eqref{EDP}. Since $g$ depends on $(t,x)$, then $U_g$ also depends on $(t,x)$.  Let $Y$ be a strong solution of 
\begin{equation}\label{Int-eq2}
\begin{cases}
\mathbb D_t^\alpha Y+(-\Delta)^sY=-\mathbb D_t^\alpha U_g\;\;&\mbox{ in }\;(0,T)\times\Omega,\\
Y=0 &\mbox{ in }\;(0,T)\times(\RR^N\setminus\Omega),\\
Y(0,\cdot)=0&\mbox{ in }\;\Omega.
\end{cases}
\end{equation}
Then clearly,
\begin{align*}
\mathbb D_t^\alpha(U_g+Y)+(-\Delta)^s(U_g+Y)=& \mathbb D_t^\alpha U_g+(-\Delta)^sU_g+\mathbb D_t^\alpha Y+(-\Delta)^sY\\
=&\mathbb D_t^\alpha U_g-\mathbb D_t^\alpha U_g=0.
\end{align*}
In addition $(U_g+Y)(0,\cdot)=0$ and $U_g+Y=g$ in $(0,T)\times (\Omc)$. Thus,
$u:=U_g+Y$ will solve \eqref{Int-eq}. Since $U_g\in \mathcal D((0,T); W^{s,2}(\RR^N))$, we have that $U_g(0,\cdot)=0$ and also $\mathbb D_t^\alpha U_g=\partial_t^\alpha U_g$. Since $\mathbb D_t^\alpha U_g\in \mathcal D((0,T); W^{s,2}(\RR^N))$, it follows from \cite{KLW} (see also \cite{GW-F})  that \eqref{Int-eq2} has a unique strong (classical) solution $Y$ given by
\begin{align*}
Y(t,x)=-\sum_{n=1}^\infty\left(\int_0^t\Big(\partial_t^\alpha U_g(t-\tau,\cdot),\varphi_n\Big)_{L^2(\Omega)}\tau^{\alpha-1}E_{\alpha,\alpha}(-\lambda_n\tau^\alpha)\;d\tau\right)\varphi_n(x).
\end{align*}
 Since $\partial_t^\alpha U_g\in C^{\infty}([0,T];W^{s,2}(\RR^N))$, it follows from \cite{KLW} or \cite{GW-F}  that  the function $Y\in C^{\infty}([0,T];W^{s,2}(\RR^N))$.
Using \eqref{eq33}, \eqref{eq34} and \eqref{eq35} we get that
\begin{align}\label{eq39}
Y(t,x)=&-\sum_{n=1}^\infty\left(\int_0^t\Big((I_{0,\tau}^{1-\alpha} \partial_\tau U_g)(t-\tau,\cdot),\varphi_n\Big)_{L^2(\Omega)}\tau^{\alpha-1}E_{\alpha,\alpha}(-\lambda_n\tau^\alpha)\;d\tau\right)\varphi_n(x)\notag\\
=&-\sum_{n=1}^\infty\left(\int_0^t\Big(\partial_\tau U_g(t-\tau,\cdot),\varphi_n\Big)_{L^2(\Omega)}I_{0,\tau}^{1-\alpha}\left(\tau^{\alpha-1}E_{\alpha,\alpha}(-\lambda_n\tau^\alpha)\right)\;d\tau\right)\varphi_n(x)\notag\\
=&-\sum_{n=1}^\infty\left(\int_0^t\Big(\partial_\tau U_g(t-\tau,\cdot),\varphi_n\Big)_{L^2(\Omega)}E_{\alpha,1}(-\lambda_n\tau^\alpha)\;d\tau\right)\varphi_n(x).
\end{align}
Since $U_g(0,\cdot)=0$, then integrating the right hand side of \eqref{eq39} by parts we get that
\begin{align*}
Y(t,x)=&\sum_{n=1}^\infty\left(\int_0^t\partial_\tau\Big( U_g(t-\tau,\cdot),\varphi_n\Big)_{L^2(\Omega)}E_{\alpha,1}(-\lambda_n\tau^\alpha)\;d\tau\right)\varphi_n(x)\\
=&-U_g(t,x)-\sum_{n=1}^\infty\left(\int_0^t\Big( U_g(t-\tau,\cdot),\varphi_n\Big)_{L^2(\Omega)}\partial_\tau\left(E_{\alpha,1}(-\lambda_n\tau^\alpha)\right)\;d\tau\right)\varphi_n(x).
\end{align*}
Using \eqref{Est-MLF2}, the preceding identity implies that
\begin{align}\label{eq-omc-1}
u(t,x):=&Y(t,x)+U_g(t,x)\notag\\
=&\sum_{n=1}^\infty\left(\int_0^t\Big( U_g(t-\tau,\cdot),\varphi_n\Big)_{L^2(\Omega)}\lambda_n\tau^{\alpha-1}\left(E_{\alpha,\alpha}(-\lambda_n\tau^\alpha)\right)\;d\tau\right)\varphi_n(x)\notag\\
=&\sum_{n=1}^\infty\left(\int_0^t\Big( U_g(t-\tau,\cdot),\lambda_n\varphi_n\Big)_{L^2(\Omega)}\tau^{\alpha-1}\left(E_{\alpha,\alpha}(-\lambda_n\tau^\alpha)\right)\;d\tau\right)\varphi_n(x).
\end{align}
It follows from \eqref{eqA9} that
\begin{align}\label{eq-omc}
\Big(U_g(t-\tau,\cdot),\lambda_n\varphi_n\Big)_{L^2(\Omega)}=-\Big(g(t-\tau,\cdot),\mathcal N_s\varphi_n\Big)_{L^2(\Omc)}.
\end{align}
Combining \eqref{eq-omc-1} and \eqref{eq-omc} we get that
\begin{align*}
u(t,x)=-\sum_{n=1}^\infty\left(\int_0^t\Big( g(t-\tau,\cdot),\mathcal N_s\varphi_n\Big)_{L^2(\Omc)}\tau^{\alpha-1}\left(E_{\alpha,\alpha}(-\lambda_n\tau^\alpha)\right)\;d\tau\right)\varphi_n(x).
\end{align*}
We have shown that $u$ given by \eqref{rep-sol} is the solution of \eqref{Int-eq}. 

{\bf Step 3}. Thirdly, we show that $u$ satisfies \eqref{eq-ess}. Let $0<\gamma<\frac 14$ be a real number. 
Using \eqref{eq39} we get that (notice that $Y(t,\cdot)\in D((-\Delta)_D^s)$)
\begin{align}\label{es-Y}
&\|(-\Delta)_D^sY(t,\cdot)\|_{L^2(\Omega)}\notag\\
=&\left\|\sum_{n=1}^\infty\lambda_n\left(\int_0^t\Big(\partial_t U_g(t-\tau,\cdot),\varphi_n\Big)_{L^2(\Om)}E_{\alpha,1}(-\lambda_n\tau^\alpha)\;d\tau\right)\varphi_n\right\|_{L^2(\Omega)}\notag\\
\le&\left\|\int_0^t\left(\sum_{n=1}^\infty\lambda_n\Big(\partial_t U_g(t-\tau,\cdot),\varphi_n\Big)_{L^2(\Om)}E_{\alpha,1}(-\lambda_n\tau^\alpha)\;d\tau\right)\varphi_n\right\|_{L^2(\Omega)}\notag\\
\le &\int_0^t\left\|\sum_{n=1}^\infty\lambda_n\Big(\partial_t U_g(t-\tau,\cdot),\varphi_n\Big)_{L^2(\Om)}E_{\alpha,1}(-\lambda_n\tau^\alpha)\;d\tau\varphi_n\right\|_{L^2(\Omega)}\notag\\
\le &\int_0^t\left(\sum_{n=1}^\infty\lambda_n^{2\gamma}\Big|\Big(\partial_t U_g(t-\tau,\cdot),\varphi_n\Big)_{L^2(\Om)}\Big|^2\cdot\Big|\lambda_n^{1-\gamma}E_{\alpha,1}(-\lambda_n\tau^\alpha)\Big|^2\right)^{\frac 12}\;d\tau.
\end{align}
Notice that $\partial_tU_g(t-\tau)=U_{\partial_tg}(t-\tau)$.
Using \eqref{27-2},  \eqref{27-1}, \eqref{New} and \eqref{E-Neu} we get from \eqref{es-Y} that there is a constant $C>0$ such that
\begin{align*}
\|Y(t,\cdot)\|_{W^{s,2}(\RR^N)}\le& \|(-\Delta)_D^sY(t,\cdot)\|_{L^2(\Omega)}\\
\le& C\int_0^t \left(\sum_{n=1}^\infty\lambda_n^{2\gamma}\Big|\Big(\partial_t U_g(t-\tau,\cdot),\varphi_n\Big)_{L^2(\Om)}\Big|^2\right)^{\frac 12}\tau^{\alpha(\gamma-1)}\;d\tau\\
=&C\int_0^t\|\partial_tU_g(t-\tau,\cdot)\|_{\mathbb V_{s,\gamma}}\tau^{\alpha(\gamma-1)}\;d\tau\\
\le&C\int_0^t\|\partial_tU_g(t-\tau,\cdot)\|_{W^{s,2}(\RR^N)}\tau^{\alpha(\gamma-1)}\;d\tau\\
\le &C\int_0^t\|\partial_tg(t-\tau,\cdot)\|_{W^{s,2}(\Omc)}\tau^{\alpha(\gamma-1)}\;d\tau\\
\le &C\|\partial_tg\|_{L^\infty((0,T);W^{s,2}(\Omc))}\int_0^t\tau^{\alpha(\gamma-1)}\;d\tau\\
\le &Ct^{\alpha(\gamma-1)+1}\|\partial_tg\|_{L^\infty((0,T);W^{s,2}(\Omc))}.
\end{align*}
Since $u=Y+U_g$, it follows from the preceding estimate and \eqref{E-Neu} that
\begin{align*}
\|u(t,\cdot)\|_{W^{s,2}(\RR^N)}\le &\|Y(t,\cdot)\|_{W^{s,2}(\RR^N)}+\|U_g(t,\cdot)\|_{W^{s,2}(\RR^N)}\\
\le& C\left(t^{\alpha(\gamma-1)+1}\|\partial_tg\|_{L^\infty((0,T);W^{s,2}(\Omc))}+\|g(t,\cdot)\|_{W^{s,2}(\Omc)}\right).
\end{align*}
Proceeding by induction we get \eqref{eq-ess} for enery $m\in\NN$.

{\bf Step 4}. Now, we prove that the series \eqref{rep-sol} converges in $C^m([0,T];W^{s,2}(\RR^N))$. Since $U_g\in C^\infty([0,T];W^{s,2}(\RR^N))$, it is sufficient to show the convergence of $Y$ given in \eqref{eq39}. Let $l,k\in\NN$ with $l<k$ and $0<\gamma<\frac 14$. Recall that $Y(t,\cdot)\in D((-\Delta)_D^s)$. By a similar calculation as in Step 3, and using \eqref{27-2}, we get that there is a constant $C>0$ such that
\begin{align*}
&\left\|-\sum_{n=l}^k\left(\int_0^t\Big( \partial_tU_g(t-\tau,\cdot),\varphi_n\Big)_{L^2(\Omega)}E_{\alpha,1}(-\lambda_n\tau^\alpha)\;d\tau\right)\varphi_n\right\|_{W^{s,2}(\RR^N)}\\
\le&C\left\|\sum_{n=l}^k\left(\int_0^t\Big( \partial_tU_g(t-\tau,\cdot),\varphi_n\Big)_{L^2(\Omega)}E_{\alpha,1}(-\lambda_n\tau^\alpha)\;d\tau\right)\varphi_n\right\|_{D((-\Delta)_D^s)}\\
\le &C\left\|\sum_{n=l}^k\lambda_n\left(\int_0^t\Big( \partial_tU_g(t-\tau,\cdot),\varphi_n\Big)_{L^2(\Omega)}E_{\alpha,1}(-\lambda_n\tau^\alpha)\;d\tau\right)\varphi_n\right\|_{L^2(\Omega)}\\
\le&C\int_0^t\left(\sum_{n=l}^k\lambda_n^{2\gamma}\Big|\Big(\partial_tU_g(t-\tau,\cdot),\varphi_n\Big)_{L^2(\Omega)}\Big|^2\right)^{\frac 12}\tau^{\alpha(\gamma-1)}\;d\tau\\
\le &C\sup_{0\le t\le T}\left(\sum_{n=l}^k\lambda_n^{2\gamma}\Big|\Big(\partial_tU_g(t,\cdot),\varphi_n\Big)_{L^2(\Omega)}\Big|^2\right)^{\frac 12}\int_0^t\tau^{\alpha(\gamma-1)}\;d\tau\\
\le &Ct^{\alpha(\gamma-1)+1}\sup_{0\le t\le T}\left(\sum_{n=l}^k\lambda_n^{2\gamma}\Big|\Big(\partial_tU_g(t,\cdot),\varphi_n\Big)_{L^2(\Omega)}\Big|^2\right)^{\frac 12}.
\end{align*}
Since $\partial_tU_g\in C([0,T];\mathbb V_{s,\gamma})$, it follows from the preceding estimate that
\begin{align*}
\sup_{0\le t\le T}&\left\|-\sum_{n=l}^k\left(\int_0^t\Big( \partial_tU_g(t-\tau,\cdot),\varphi_n\Big)_{L^2(\Omega)}E_{\alpha,1}(-\lambda_n\tau^\alpha)\;d\tau\right)\varphi_n\right\|_{W^{s,2}(\RR^N)}\\
&\le CT^{\alpha(\gamma-1)+1}\sup_{0\le t\le T}\left(\sum_{n=l}^k\lambda_n^{2\gamma}\Big|\Big(\partial_tU_g(t,\cdot),\varphi_n\Big)_{L^2(\Omega)}\Big|^2\right)^{\frac 12}\rightarrow 0\;\;\mbox{ as }\; l,k\to\infty.
\end{align*}
We have shown that the series \eqref{eq39} converges in $W^{s,2}(\RR^N)$ uniformly in $t\in [0,T]$. Similarly, we can show the uniform convergence of the series
\begin{align*}
\partial_t^mY(t,\cdot)=-\sum_{n=1}^\infty\left(\int_0^t\Big( \partial_t^{m+1}U_g(t-\tau,\cdot),\varphi_n\Big)_{L^2(\Omega)}E_{\alpha,1}(-\lambda_n\tau^\alpha)\;d\tau\right)\varphi_n,
\end{align*}
for any $m\in\NN$. The proof of the theorem is finished.
\end{proof}

\subsection{Existence and representation of solutions to the dual system} Now, we consider \eqref{dual}.
We have the following existence and representation of solutions.

\begin{proposition}\label{Dual-theo-weak}
Let  $u_0\in L^2(\Omega)$. Then \eqref{dual} has a unique strong solution $v$ given by
\begin{align}\label{eq413}
v(t,x)=\sum_{n=1}^\infty \Big(u_0,\varphi_n\Big)_{L^2(\Omega)}(T-t)^{\alpha-1}E_{\alpha,\alpha}(-\lambda_n(T-t)^\alpha)\varphi_n(x),
\end{align}
and there is a constant $C>0$ such that
\begin{align}\label{E1}
\left\|I_{t,T}^{1-\alpha}v(t,\cdot)\right\|_{L^2(\Om)}\le C\|u_0\|_{L^2(\Omega)},
\end{align}
and
\begin{align}\label{E2}
\left\|D_{t,T}^\alpha v(t,\cdot)\right\|_{L^2(\Omega)}\le C(T-t)^{-1}\|u_0\|_{L^2(\Omega)}.
\end{align}
Moreover, $v\in C([0,T);D((-\Delta)_D^s))\cap L^1((0,T);L^2(\Omega))$ and there is a constant $C>0$ such that
\begin{align}\label{E3}
\|v(t,\cdot)\|_{L^2(\Omega)}\le C(T-t)^{\alpha-1}\|u_0\|_{L^2(\Omega)}.
\end{align}
\end{proposition}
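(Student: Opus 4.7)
The plan is to expand $v$ in the orthonormal basis $(\varphi_n)_{n\in\NN}$ of eigenfunctions of $(-\Delta)_D^s$ and reduce the backward-in-time fractional equation to a scalar Riemann--Liouville ODE for each Fourier coefficient. The key preparatory step is a time reversal: setting $\tau:=T-t$ and $\tilde v(\tau,x):=v(T-\tau,x)$, a direct computation using the definitions of $I_{t,T}^{1-\alpha}$ and \eqref{D-R} shows that
\begin{equation*}
I_{t,T}^{1-\alpha}v(t,\cdot)=I_{0,\tau}^{1-\alpha}\tilde v(\tau,\cdot),\qquad D_{t,T}^\alpha v(t,\cdot)=D_\tau^{\alpha}\tilde v(\tau,\cdot),
\end{equation*}
so that \eqref{dual} becomes the forward-in-time problem $D_\tau^\alpha\tilde v+(-\Delta)^s\tilde v=0$ with zero exterior condition and $I_{0,\tau}^{1-\alpha}\tilde v(0,\cdot)=u_0$.

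Writing $\tilde v(\tau,x)=\sum_{n=1}^\infty c_n(\tau)\varphi_n(x)$ and testing against $\varphi_n$, each coefficient satisfies the scalar problem $D_\tau^\alpha c_n+\lambda_n c_n=0$ with $I_{0,\tau}^{1-\alpha}c_n(0)=(u_0,\varphi_n)_{L^2(\Omega)}$. Using identity \eqref{eq34} together with $E_{\alpha,1}(0)=1$, one verifies that the unique solution is
\begin{equation*}
c_n(\tau)=(u_0,\varphi_n)_{L^2(\Omega)}\,\tau^{\alpha-1}E_{\alpha,\alpha}(-\lambda_n\tau^\alpha),
\end{equation*}
and undoing the time reversal produces the formula \eqref{eq413}.

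Next I would establish the estimates and regularity by bounding the series termwise via Lemma \ref{lem-INE} and Parseval. Applying \eqref{Est-MLF} with $\beta=\alpha$ and $z=-\lambda_n(T-t)^\alpha$ gives $|E_{\alpha,\alpha}(-\lambda_n(T-t)^\alpha)|\le C$, so by Parseval
\begin{equation*}
\|v(t,\cdot)\|_{L^2(\Omega)}^2\le C(T-t)^{2(\alpha-1)}\sum_{n=1}^\infty|(u_0,\varphi_n)_{L^2(\Omega)}|^2,
\end{equation*}
yielding \eqref{E3}. Termwise application of \eqref{eq34} gives $I_{t,T}^{1-\alpha}v(t,\cdot)=\sum_n(u_0,\varphi_n)_{L^2(\Omega)}E_{\alpha,1}(-\lambda_n(T-t)^\alpha)\varphi_n$; again by \eqref{Est-MLF} with $\beta=1$, the Mittag-Leffler factors are uniformly bounded and Parseval delivers \eqref{E1}. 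For \eqref{E2}, using the equation $D_{t,T}^\alpha v=-(-\Delta)^s v$ and invoking \eqref{IN-L2} with $\gamma=0$ to obtain $|\lambda_n(T-t)^{\alpha-1}E_{\alpha,\alpha}(-\lambda_n(T-t)^\alpha)|\le C(T-t)^{-1}$, one gets $\|(-\Delta)^s v(t,\cdot)\|_{L^2(\Omega)}\le C(T-t)^{-1}\|u_0\|_{L^2(\Omega)}$, which proves \eqref{E2} and simultaneously gives $v\in C([0,T);D((-\Delta)_D^s))$ by dominated convergence applied to the partial sums. The $L^1((0,T);L^2(\Omega))$ integrability follows from \eqref{E3} because $\int_0^T(T-t)^{\alpha-1}\,dt<\infty$.

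Uniqueness follows by projecting any strong solution onto the basis $(\varphi_n)$: if $v_1,v_2$ are two strong solutions corresponding to the same $u_0$, the difference $w=v_1-v_2$ has Fourier coefficients $w_n(t)$ satisfying $D_{t,T}^\alpha w_n+\lambda_n w_n=0$ with $I_{t,T}^{1-\alpha}w_n(T)=0$; by the uniqueness for the scalar problem just solved, $w_n\equiv 0$, hence $w=0$. The main technical obstacle I anticipate is justifying the termwise application of the time-fractional operators $D_{t,T}^\alpha$ and $I_{t,T}^{1-\alpha}$ to the infinite series \eqref{eq413}, and in particular the interchange of $D_{t,T}^\alpha$ with summation. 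This is exactly the point where the estimates obtained for the partial sums via \eqref{IN-L2} (with $\gamma=0$) are needed: they guarantee uniform convergence on compact subsets of $[0,T)$, legitimizing all the termwise manipulations.
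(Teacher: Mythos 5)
Your proposal is correct and follows essentially the same route as the paper: eigenfunction expansion reducing \eqref{dual} to scalar Mittag--Leffler problems, the representation \eqref{eq413} with $I_{t,T}^{1-\alpha}v=\sum_n u_{0,n}E_{\alpha,1}(-\lambda_n(T-t)^\alpha)\varphi_n$ via \eqref{eq34}, termwise estimates from \eqref{Est-MLF} and Lemma \ref{lem-INE} combined with Parseval, and uniqueness by projection onto the $\varphi_n$ and uniqueness of the scalar terminal-value problem. The only difference is presentational: you derive the scalar solution through the time reversal $\tau=T-t$, whereas the paper works directly with the backward operators and verifies the formula on partial sums, but the substance and the key lemmas invoked are identical.
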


\begin{proof}
We prove the proposition in two steps.

{\bf Step 1}. First, we show uniqueness. Indeed, let $v$ be a solution of \eqref{dual} with $u_0=0$. Taking the inner product of \eqref{dual} with $\varphi_n$ and setting $v_n(t)=\left(v(t,\cdot),\varphi_n\right)_{L^2(\Omega)}$, we get that (given that the operator $(-\Delta)_D^s$ is selfadjoint)
\begin{equation}\label{du-sc}
D_{t,T}^\alpha v_n(t)=-\lambda_nv_n(t)\;\;\mbox{ for a.e. }\; t\in (0,T).
\end{equation}
Since $I_{t,T}^{1-\alpha}v\in C([0,T];L^2(\Omega))$, we have that $I_{t,T}^{1-\alpha}v_n=\left(I_{t,T}^{1-\alpha}v,\varphi\right)_{L^2(\Omega)}\in C[0,T]$ and
\begin{align*}
\left|I_{t,T}^{1-\alpha}v_n(t)\right|^2\le \sum_{n=1}^\infty\left|I_{t,T}^{1-\alpha}v_n(t)\right|^2\le \left\|I_{t,T}^{1-\alpha}v\right\|_{L^2(\Omega)}\to 0\;\mbox{ as }\; t\to T.
\end{align*}
This implies that
\begin{align}\label{du-sc-2}
I_{t,T}^{1-\alpha} v_n(T)=0.
\end{align}
Since \eqref{du-sc} with the final condition \eqref{du-sc-2} has zero as its unique solution (see e.g. \cite{Ba01}), it follows that $v_n(t)=0$ for $n=1,2,\ldots$. Since $(\varphi_n)$ is a complete system in $L^2(\Omega)$, we have that $v=0$ in $(0,T)\times\Omega$. The proof of the uniqueness is complete.

{\bf Step 2}. Second, we show the existence. Let $u_{0,n}:=(u_0,\varphi_n)_{L^2(\Omega)}$. For $1\le n\le m$ we set
\begin{align*}
v_m(t,x):=\sum_{n=1}^m u_{0,n}(T-t)^{\alpha-1}E_{\alpha,\alpha}(-\lambda_n(T-t)^\alpha)\varphi_n(x).
\end{align*}

\begin{enumerate}
\item[(i)] Let $v$ be given by \eqref{eq413}. We claim that $I_{t,T}^{1-\alpha}v\in C([0,T];L^2(\Omega))$. Integrating termwise, we get that
\begin{equation}\label{e419}
I_{t,T}^{1-\alpha}v_m(t,x)=\sum_{n=1}^mu_{0,n}E_{\alpha,1}(-\lambda_n(T-t)^\alpha)\varphi_n(x)
\end{equation}
in $L^2(\Om)$. Using \eqref{Est-MLF} and Lemma \ref{lem-INE}, we get that there is a constant $C>0$ such that for every $t\in [0,T]$ and $m,\tilde m\in\NN$ with $m>\tilde m$, we have
\begin{align*}
\left\|I_{t,T}^{1-\alpha}v_{\tilde m}-I_{t,T}^{1-\alpha}v_m\right\|_{L^2(\Omega)}^2=&2\sum_{n=\tilde m+1}^m\Big|u_{0,n}E_{\alpha,1}(-\lambda_n(T-t)^{\alpha})\Big|^2\\
\le &C\sum_{n=\tilde m+1}^m|u_{0,n}|^2\rightarrow 0\;\mbox{ as }\; \tilde m, m\to\infty.
\end{align*}
We have shown that the series 
\begin{align*}
\sum_{n=1}^\infty u_{0,n}E_{\alpha,1}(-\lambda_n(T-t)^\alpha)\varphi_n\rightarrow I_{t,T}^{1-\alpha} v(t,\cdot)\;\mbox{ in }\; L^2(\Omega),
\end{align*}
and that the convergence is uniform in $t\in [0,T]$. We have proved that $I_{t,T}^{1-\alpha}v\in C([0,T];L^2(\Omega))$. Using \eqref{Est-MLF} and Lemma \ref{lem-INE} again, we get that there is a constant $C>0$ such that
\begin{align*}
\left\|I_{t,T}^{1-\alpha}v(t,\cdot)\right\|_{L^2(\Omega)}^2\le C\|u_0\|_{L^2(\Omega)}^2.
\end{align*}
This gives \eqref{E1}.

\item[(ii)] We show that $D_{t,T}^\alpha v \in C([0,T);L^2(\Omega))$. This follows as in part (i) with the difference that here, the convergence is only uniform on compact subset of $[0,T)$. Since $D_{t,T}^\alpha v=-(-\Delta)_D^sv$, then using \eqref{IN-L2}, we get that there is a constant $C>0$ such that
\begin{align*}
\left\|D_{t,T}^\alpha v(t,\cdot)\right\|_{L^2(\Omega)}^2\le &2\sum_{n=1}^\infty|u_{0,n}|^2\Big|\lambda_n(T-t)^{\alpha-1}E_{\alpha,\alpha}(-\lambda_n(T-t)^\alpha)\Big|^2\\
\le&C(T-t)^{-2}\|u_0\|_{L^2(\Omega)}^2.
\end{align*}
Hence, $D_{t,T}^\alpha v \in C([0,T);L^2(\Omega))$. We have shown \eqref{E2} which also implies that $v\in C([0,T);D((-\Delta)_D^s))$.

\item[(iii)] It follows from \eqref{e419} that
\begin{align*}
I_{t,T}^{1-\alpha}v(T,\cdot)=\sum_{n=1}^\infty u_{0,n}\varphi_n=u_0.
\end{align*}
\end{enumerate}

Using \eqref{Est-MLF}, we get that there is a constant $C>0$ such that
\begin{align*}
\|v(t,\cdot)\|_{L^2(\Omega)}^2=&\left\|\sum_{n=1}^\infty u_{0,n}(T-t)^{\alpha-1}E_{\alpha,\alpha}(-\lambda_n(T-t)^\alpha)\varphi_n\right\|_{L^2(\Omega)}^2\\
\le &2\sum_{n=1}^\infty |u_{0,n}|^2\Big|(T-t)^{\alpha-1}E_{\alpha,\alpha}(-\lambda_n(T-t)^\alpha)\Big|^2\\
\le &C(T-t)^{2(\alpha-1)}\sum_{n=1}^\infty|u_{0,n}|^2=C(T-t)^{2(\alpha-1)}\|u_0\|_{L^2(\Omega)}^2,
\end{align*}
and we have shown \eqref{E3}. It follows from \eqref{E3} that
\begin{align*}
\int_0^T\|v(t,\cdot)\|_{L^2(\Omega)}\;dt\le C\|u_0\|_{L^2(\Omega)}\int_0^T(T-t)^{\alpha-1}\;dt=CT^\alpha\|u_0\|_{L^2(\Omega)}.
\end{align*}
Hence, $v\in L^1((0,T);L^2(\Omega))$.
The proof of the proposition is finished.
\end{proof}

\begin{lemma}\label{lem-anal}
Let $v$ be the unique strong solution of \eqref{dual}. Then the mapping $[0,T)\ni t\mapsto \mathcal N_sv(t,\cdot)\in L^2(\Omc)$ can be analytically extended to the half-plane $\Sigma_T:=\{z\in\CC:\; \operatorname{Re}(z)<T\}$.
\end{lemma}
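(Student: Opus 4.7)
The plan is to extend the series representation \eqref{eq413} of $v$ from real $t \in [0, T)$ to complex $z \in \Sigma_T$ and show that the resulting series defines a holomorphic map $z \mapsto v(z, \cdot)$ valued in $D((-\Delta)_D^s)$, which embeds continuously into $W^{s,2}(\RR^N)$ by \eqref{27-2}. Composing with the bounded linear operator $\mathcal N_s : W^{s,2}(\RR^N) \to L^2(\Omc)$ provided by Lemma \ref{lem-GSU} then transfers the analyticity to $z \mapsto \mathcal N_s v(z, \cdot)$.

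To begin, I write $w := T - z$ so that $\operatorname{Re}(w) > 0$ on $\Sigma_T$, and fix the principal branches of $w^{\alpha - 1}$ and $w^\alpha$. Since $E_{\alpha,\alpha}$ is entire (see \eqref{mm}), each term
$$z \longmapsto (u_0, \varphi_n)_{L^2(\Omega)}\,(T - z)^{\alpha - 1}\, E_{\alpha,\alpha}\bigl(-\lambda_n(T - z)^\alpha\bigr)\, \varphi_n$$
is holomorphic on $\Sigma_T$ with values in $D((-\Delta)_D^s)$, and so is every partial sum of the extended series.

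The next step is to produce a uniform-in-$z$ estimate on each compact subset $K \subset \Sigma_T$ with an appropriate $\lambda_n$-decay. On such a $K$ we have $\operatorname{Re}(T - z) \ge \delta > 0$ and $c_1 \le |T - z| \le c_2$, hence $|\arg(T - z)| \le \pi/2 - \eta$ for some $\eta = \eta(K) > 0$. Therefore $|\arg(-\lambda_n(T-z)^\alpha)| \ge \pi - \alpha(\pi/2 - \eta)$, which for every $0 < \alpha \le 1$ exceeds some admissible $\mu \in (\alpha\pi/2,\min\{\pi,\alpha\pi\})$; for $\alpha = 1$ one may alternatively use the identity $E_{1,1}(\zeta) = e^\zeta$ directly. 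Estimate \eqref{Est-MLF} then yields
$$\bigl|(T - z)^{\alpha - 1}\, E_{\alpha,\alpha}\bigl(-\lambda_n(T - z)^\alpha\bigr)\bigr| \le \frac{C\,|T-z|^{\alpha-1}}{1 + \lambda_n|T-z|^\alpha} \le \frac{C_K}{\lambda_n}$$
uniformly in $z \in K$ and $n$. Writing $v_N(z,\cdot)$ for the $N$-th partial sum, Parseval's identity gives
$$\bigl\|(-\Delta)_D^s v_N(z,\cdot)\bigr\|_{L^2(\Omega)}^2 \le \sum_{n=1}^N \lambda_n^2\, |(u_0,\varphi_n)_{L^2(\Omega)}|^2\, \frac{C_K^2}{\lambda_n^2} \le C_K^2\,\|u_0\|_{L^2(\Omega)}^2,$$
and an analogous tail estimate shows the partial sums are Cauchy in $D((-\Delta)_D^s)$ uniformly in $z \in K$, hence also in $W^{s,2}(\RR^N)$ via \eqref{27-2}.

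By the Weierstrass theorem for Banach-space-valued holomorphic functions, the limit $z \mapsto v(z, \cdot) \in W^{s,2}(\RR^N)$ is then holomorphic on $\Sigma_T$ and coincides with the strong solution for real $z = t \in [0, T)$. Applying the bounded linear operator $\mathcal N_s: W^{s,2}(\RR^N) \to L^2(\Omc)$ preserves holomorphy and yields the sought analytic extension of $z \mapsto \mathcal N_s v(z, \cdot)$ to $\Sigma_T$. The principal obstacle is the second step: keeping the branches of $w^{\alpha-1}$ and $w^\alpha$ consistent while verifying that the hypothesis of \eqref{Est-MLF} on $|\arg(-\lambda_n(T-z)^\alpha)|$ can be enforced uniformly on every compact $K \subset \Sigma_T$ for the full range $0 < \alpha \le 1$, and then extracting enough $\lambda_n$-decay to absorb the factor $\lambda_n^2$ appearing in the $D((-\Delta)_D^s)$-norm; once this book-keeping is done, the Weierstrass argument is routine.
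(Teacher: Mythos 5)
Your proof is correct and follows essentially the same route as the paper: extend the Mittag-Leffler series term by term to $\Sigma_T$, obtain a uniform bound on compact subsets via \eqref{Est-MLF} strong enough to control the $D((-\Delta)_D^s)$-norm, and conclude analyticity of $\mathcal N_s v$ by uniform convergence together with the boundedness of $\mathcal N_s$ on $D((-\Delta)_D^s)\subset W^{s,2}(\RR^N)$. The only (welcome) difference is that you verify explicitly the argument condition $|\mbox{arg}(-\lambda_n(T-z)^\alpha)|\ge\mu$ needed to invoke \eqref{Est-MLF} for complex $z$, a point the paper passes over with ``proceeding as above.''
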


\begin{proof}
 We recall that for every $t\in [0,T)$ fixed, we have that 
\begin{align*} 
 v(t,\cdot)\in D((-\Delta)_D^s)\hookrightarrow W_0^{s,2}(\bOm)\hookrightarrow W^{s,2}(\RR^N). 
\end{align*} 
 Hence, by Lemma \ref{lem-GSU}, $\mathcal N_sv(t,\cdot)$ exists and belongs to $L^2(\Omc)$.

 We claim that
\begin{align}\label{nor-sol}
\mathcal N_sv(t,\cdot)=\sum_{n=1}^\infty u_{0,n}(T-t)^{\alpha-1}E_{\alpha,\alpha}(-\lambda_n(T-t)^\alpha)\mathcal N_s\varphi_n,
\end{align}
and the series converges in $L^2(\Omc)$ for every $t\in [0,T)$. Let $\delta>0$ be fixed but arbitrary and let $t\in [0,T-\delta]$. Let $n,m\in\NN$ with $n>m$. Then, using the fact that $\mathcal N_sv(t,\cdot): D((-\Delta)_D^s)\subset W^{s,2}(\RR^N)\to L^2(\Omc)$ is bounded and \eqref{Est-MLF}, we get that there is a constant $C>0$ such that

 \begin{align*}
 &\left\Vert\sum_{n=m+1}^\infty u_{0,n}(T-t)^{\alpha-1}E_{\alpha,\alpha}(-\lambda_n(T-t)^\alpha)\mathcal N_s \varphi_n\right\Vert_{L^2(\Omc)}^2\\
 \le &C\left\Vert\sum_{n=m+1}^\infty u_{0,n}(T-t)^{\alpha-1}E_{\alpha,\alpha}(-\lambda_n(T-t)^\alpha) \varphi_n\right\Vert_{D((-\Delta)_D^s)}^2\\
 \le &C\sum_{n=m+1}^\infty |u_{0,n}|^2\Big|\lambda_n(T-t)^{\alpha-1}E_{\alpha,\alpha}(-\lambda_n(T-t)^\alpha)\Big|^2\\
 \le &C\sum_{n=m+1}^\infty |u_{0,n}|^2|T-t|^{-2}
 \le C\delta^{-2}\sum_{n=m+1}^\infty |u_{0,n}|^2\to 0\;\mbox{ as }\; m\to\infty.
 \end{align*}
We have shown that $\mathcal N_s v$ is given by \eqref{nor-sol} and the series is convergent in $L^2(\Omc)$ uniformly in any compact subset of $[0,T)$ and the claim is proved.

Since $E_{\alpha,\alpha}(-\lambda_nz)$ is an entire function, it follows that the function
\begin{align*}
(T-t)^{\alpha-1}E_{\alpha,\alpha}(-\lambda_n(T-t)^\alpha)
\end{align*}
 can be analytically extended to $\Sigma_T$. This implies that the function

\begin{align*}
 \sum_{n=1}^mu_{0,n}(T-z)^{\alpha-1}E_{\alpha,\alpha}(-\lambda_n(T-z)^\alpha)\mathcal N_s \varphi_n
\end{align*}
is analytic in $\Sigma_T$. Let $\delta>0$ be fixed but otherwise arbitrary. Let $z\in\CC$ satisfy $\mbox{Re}(z)\le T-\delta$. Then proceeding as above we get that
 \begin{align*}
 &\left\Vert\sum_{n=m+1}^\infty u_{0,n}(T-z)^{\alpha-1}E_{\alpha,\alpha}(-\lambda_n(T-z)^\alpha)\mathcal N_s\varphi_n\right\Vert_{L^2(\Omc)}^2\\
 \le &C\delta^{-2}\sum_{n=m+1}^\infty |u_{0,n}|^2\to 0\;\mbox{ as }\; m\to\infty.
 \end{align*}
 We have shown that
 \begin{align}\label{form-nor}
\mathcal N_sv(z,\cdot)=\sum_{n=1}^\infty u_{0,n}(T-z)^{\alpha-1}E_{\alpha,\alpha}(-\lambda_n(T-z)^\alpha)\mathcal N_s\varphi_n,
 \end{align}
and the series is uniformly convergent in any compact subset of $\Sigma_T$. Hence, $\mathcal N_sv$ given by \eqref{form-nor} is also analytic in $\Sigma_T$. 
The proof of the lemma is finished.
\end{proof}

\section{Proof of the main results}\label{sec-proof-res}

Now we give the proof of our main results.

\begin{proof}[\bf Proof of Theorem \ref{theo-cont-pro}]
Assume that $\mathcal N_s v=0$ in $(0,T)\times\mathcal O$. Since $\mathcal N_s v:[0,T)\to L^2(\Omc)$ can be analytically extended to $\Sigma_T$ (by Lemma \ref{lem-anal}), it follows that for $(t,x)\in(-\infty,T)\times\mathcal O$, 
\begin{align}\label{Exp-v}
\mathcal N_s v(t,x)
=\sum_{n=1}^\infty u_{0,n}(T-t)^{\alpha-1}E_{\alpha,\alpha}(-\lambda_n(T-t)^\alpha)\mathcal N_s \varphi_n(x)=0.
\end{align}
Let $\{\lambda_k\}_{k\in\NN}$ be the set of all eigenvalues of $(-\Delta)_D^s$ and $\{\psi_{k_j}\}_{1\le j\le m_k}$  an orthonormal basis for $\mbox{ker}(\lambda_k-(-\Delta)_D^s)$. Then \eqref{Exp-v} can be rewritten for $(t,x)\in(-\infty,T)\times\mathcal O$ as
\begin{align}\label{e156}
\mathcal N_s v(t,x)
=\sum_{k=1}^\infty \left(\sum_{j=1}^{m_k}u_{0,k_j}\mathcal N_s\psi_{k_j}(x)\right)(T-t)^{\alpha-1}E_{\alpha,\alpha}(-\lambda_k(T-t)^\alpha))=0.
\end{align}

Let $z\in\CC$ with $\mbox{Re}(z):=\eta>0$ and $m\in\NN$. Since $\psi_{k_j}$, $1\le j\le m_k$, $1\le k\le m$, are orthonormal, then using Lemma \ref{lem-INE} and the fact that $\mathcal N_s : \mathbb V_{s,1-\gamma}\subset W^{s,2}(\RR^N)\to L^2(\Omc)$ is bounded for every $0<\gamma< \frac 14 $,
we get that there is a constant $C>0$ such that

\begin{align}\label{M2}
 &\left\Vert\sum_{k=1}^m\left(\sum_{j=1}^{m_k}u_{0,k_j}\mathcal N_s\psi_{k_j}\right)e^{z(t-T)}(T-t)^{\alpha-1}E_{\alpha,\alpha}(-\lambda_k(T-t)^\alpha)\right\Vert_{L^2(\Omc)}^2\notag\\
\le &C\sum_{k=1}^\infty \left(\sum_{j=1}^{m_k}|u_{0,k_j}|^2\right)e^{2\eta(t-T)}\Big|\lambda_k^{1-\gamma}(T-t)^{\alpha-1}E_{\alpha,\alpha}(-\lambda_k(T-t)^\alpha)\Big|^2\notag\\
 \le& Ce^{2\eta(t-T)}(T-t)^{2(\alpha\gamma-1)}\|u_0\|_{L^2(\Omega)}^2.
\end{align}

Let
\begin{align*}
v_m(t,\cdot):=\sum_{k=1}^m\left(\sum_{j=1}^{m_k}u_{0,k_j}\mathcal N_s\psi_{k_j}\right)e^{z(t-T)}(T-t)^{\alpha-1}E_{\alpha,\alpha}(-\lambda_k(T-t)^\alpha).
\end{align*}

It follows from  \eqref{M2} that for every $0<\gamma< \frac 14$,
\begin{align}\label{nporm-2}
\|v_m(t,\cdot)\|_{L^2(\Omc)}
\le Ce^{\eta(t-T)}(T-t)^{\alpha\gamma-1}\|u_0\|_{L^2(\Omega)}.
\end{align}
The right-hand side of \eqref{nporm-2} is integrable over $t\in (-\infty,T)$ and 
\begin{align*}
\left(\int_{-\infty}^Te^{\eta(t-T)}(T-t)^{\alpha\gamma-1}\;dt\right)\|u_0\|_{L^2(\Omega)}=\frac{\Gamma(\alpha\gamma)}{\eta^{\alpha\gamma}}\|u_0\|_{L^2(\Omega)}.
\end{align*}

By the Lebesgue dominated convergence theorem, we get that 
\begin{align}\label{e158}
&\int_{-\infty}^Te^{z(t-T)}\sum_{k=1}^\infty \left(\sum_{j=1}^{m_k}u_{0,k_j}\mathcal N_s\psi_{k_j}\right)(T-t)^{\alpha-1}E_{\alpha,\alpha}(-\lambda_k(T-t)^\alpha)\;dt\notag\\
=&\sum_{k=1}^\infty \sum_{j=1}^{m_k}\frac{u_{0,k_j}}{z^\alpha+\lambda_k}\mathcal N_s\psi_{k_j},\;\;x\in\Omc,\;\;\mbox{Re}(z)>0.
\end{align}
In \eqref{e158}, we have used  that
\begin{align*}
\int_{-\infty}^Te^{z(t-T)}(T-t)^{\alpha-1}E_{\alpha,\alpha}(-\lambda_k(T-t)^\alpha)dt=\frac{1}{z^\alpha+\lambda_k},
\end{align*}
which follows from a change of variable and \eqref{lap-ml}.
It follows from \eqref{e156}, \eqref{e158} and the assumption, that 
\begin{align*}
\sum_{k=1}^\infty \sum_{j=1}^{m_k}\frac{u_{0,k_j}}{z^\alpha+\lambda_k}\mathcal N_s\psi_{k_j}(x)=0,\;\;x\in\mathcal O,\;\mbox{Re}(z)>0.
\end{align*}
Letting $\eta:=z^\alpha$, we have shown that
\begin{align}\label{e416}
\sum_{k=1}^\infty \sum_{j=1}^{m_k}\frac{u_{0,k_j}}{\eta+\lambda_k}\mathcal N_s\psi_{k_j}(x)=0,\;\;x\in\mathcal O,\;\mbox{Re}(\eta)>0.
\end{align}
Using the analytic continuation in $\eta$,  we get that \eqref{e416} holds for every $\eta\in\CC\setminus\{-\lambda_k\}_{k\in\NN}$. Taking a suitable small circle about $-\lambda_l$ and not including $\{-\lambda_k\}_{k\ne l}$ and integrating \eqref{e416} over that circle we get that 
\begin{align}\label{W}
\sum_{j=1}^{m_l}u_{0,l_j}\mathcal N_s\psi_{l_j}=0\;\;\mbox{ in }\;\mathcal O.
\end{align}
Let $w_l:= \sum_{j=1}^{m_l}u_{0,l_j}\psi_{l_j}$.
It follows from \eqref{W} that $\mathcal N_s w_l=0$ in $\mathcal O$. We have shown that for every $l$, $w_l$ solves the exterior nonlocal Neumann problem:
\begin{align*}
(-\Delta)_D^sw_l=\lambda_lw_l\;\;\mbox{  in }\; \Omega\;\;\mbox{ and }\;\mathcal N_s w_l=0 \;\mbox{ in }\;\mathcal O.
\end{align*}
It follows from Theorem \ref{theo-27}  that $w_l=0$ in $\Omega$ for every $l$. Since $\{\psi_{l_j}\}_{1\le j\le m_k}$ is linearly independent in $L^2(\Omega)$, we get that $(u_0,\psi_{l_j})_{L^2(\Omega)}=0$ for $1\le j\le m_k$, $k\in\NN$. Therefore,
 $u_0=0$ and we have shown that $v=0$ in $(0,T)\times \Omega$.  
The proof of the theorem is finished.
\end{proof}

Now, we prove our second main result.

\begin{proof}[\bf Proof of Theorem \ref{theo-appro-con}]

Let $g\in \mathcal D((0,T)\times\mathcal O)$. Recall that by Remark \ref{rem-33}, it suffices to prove that  \eqref{Int-eq} is approximately controllable.  Indeed, let $u$ be the unique strong solution of \eqref{Int-eq}  and $v$ the unique strong solution of \eqref{dual} with $u_0\in L^2(\Omega)$. First, it follows from Theorem \ref{theo-weak} that $\mathbb D_t^\alpha u$, $(-\Delta)^su\in L^\infty((0,T);L^2(\Omega))$. Second, it follows from Proposition \ref{Dual-theo-weak} that $v\in L^1((0,T);L^2(\Omega))$. Moreover, we have that $u(T,\cdot), I_{t,T}^{1-\alpha}v(T,\cdot)\in L^2(\Omega)$. Integrating by parts (by using \eqref{IP01}) on $(0,T-\delta)$ for $\delta>0$ small and taking the limit as $\delta\downarrow 0$ if necessary, and using  \eqref{Int-Part} we get that
\begin{align}\label{int-del-2}
0=&\int_0^{T}\int_{\Omega}\left(\mathbb D_t^\alpha u+(-\Delta)^su\right)v\;dxdt\notag\\
=&\int_0^{T}\int_{\Omega}v\mathbb D_t^\alpha u\;dxdt+\int_0^{T}\int_{\Omega}v(-\Delta)^su\;dxdt \notag\\
=&\int_0^{T}\int_{\Omega}u D_{t,T}^\alpha v\;dxdt+\Big(u(T,\cdot), I_{t,T}^{1-\alpha}v(T,\cdot)\Big)_{L^2(\Omega)}\notag\\
&+\int_0^{T}\int_{\Omega}u(-\Delta)^sv\;dxdt+\int_0^T\int_{\Omc}\Big(u\mathcal N_sv-v\mathcal N_su\Big)\;dx\;dt.
\end{align}
 It follows from \eqref{int-del-2} and \eqref{dual} that
\begin{align*}
0=&\int_0^{T}\int_{\Omega}\left(D_{t,T}^\alpha v+(-\Delta)^s v\right)u\;dxdt+\Big(u(T,\cdot), I_{t,T}^{1-\alpha}v(T,\cdot)\Big)_{L^2(\Omega)}\\
&+\int_0^T\int_{\Omc}g\mathcal N_s v\;dx\;dt\notag\\
=&\Big(u(T,\cdot), I_{t,T}^{1-\alpha}v(T,\cdot)\Big)_{L^2(\Omega)}+\int_0^T\int_{\Omc}g\mathcal N_s v\;dx\;dt.
\end{align*}
We  have shown  that
\begin{align}\label{eq41}
\int_{\Om}u(T,x)u_0(x)\;dx +\int_0^T\int_{\Omc}g\mathcal N_s v\;dx\;dt=0.
\end{align}
To prove that the set $\{(u(T,\cdot):\; g\in \mathcal D((0,T)\times\mathcal O)\}$ is dense in $L^2(\Omega)$, we have to show that if $u_0\in  L^2(\Omega)$ is such that
\begin{align}\label{eq42}
\int_{\Om}u(T,x)u_0(x)\;dx=0,
\end{align}
for any $g\in\mathcal D((0,T)\times\mathcal O)$, then $u_0=0$. Indeed, let $u_0$ satisfy \eqref{eq42}. It follows from \eqref{eq41} and \eqref{eq42} that
\begin{align*}
\int_0^T\int_{\Omc}g\mathcal N_s v\;dx\;dt=0,
\end{align*}
for any $g\in \mathcal D((0,T)\times\mathcal O)$. By the fundamental lemma of the calculus of variations, we have that
\begin{align*}
\mathcal N_sv=0\;\;\mbox{ in }\; (0,T)\times\mathcal O.
\end{align*}
It follows from Theorem \ref{theo-cont-pro} that $v=0$ in $(0,T)\times\Omega$.
Since the solution of \eqref{dual} is unique, we have that $u_0=0$ on $\Omega$. 
The proof of the theorem is finished.
\end{proof}

{\bf Acknowledgments}: The work of the author is partially supported by the Air Force Office of Scientific Research under the Award No: FA9550-15-1-0027.

\bibliographystyle{plain}
\bibliography{biblio}

\end{document}